\newcolumntype{C}{>{\centering}X} 
\theoremstyle{plain} 
\newtheorem{theo}{Theorem}[section]
\newtheorem*{theo*}{Theorem} 
\newtheorem{coro}[theo]{Corollary}
\newtheorem{propo}[theo]{Proposition}
\newtheorem{lemma}[theo]{Lemma}
\theoremstyle{definition}
\newtheorem{defi}[theo]{Definition}
\newtheorem{remark}[theo]{Remark}
\newtheorem{example}[theo]{Example}
\newcommand{\FS}{{\mathfrak{S}}}
\newcommand{\CL}{{\mathcal{L}}} 
 \newcommand{\CH}{{\mathcal{H}}}
 \newcommand{\CR}{{\mathcal{R}}}
 \newcommand{\CA}{{\mathcal{A}}}
\renewcommand{\CR}{{\mathcal{R}}}
\renewcommand{\CL}{{\mathcal{L}}} 
\newcommand{\CK}{{\mathcal{K}}}
\newcommand{\CB}{{\mathcal{B}}}
\newcommand{\BN}{{\mathbb{N}}}
\newcommand{\BC}{{\mathbb{C}}} 
\newcommand{\BR}{{\mathbb{R}}}
\newcommand{\fp}{{\mathfrak{p}}}
\newcommand{\fq}{{\mathfrak{q}}}
\def\hlinewd#1{%
\noalign{\ifnum0=`}\fi\hrule \@height #1 %
\futurelet\reserved@a\@xhline}
 \newenvironment{disarray}%
  {\everymath{\displaystyle\everymath{}}\array}%
  {\endarray}
\newcommand{\qg}{{\backslash}} 
\newcommand{\eps}{\varepsilon} 
\newcommand{\Lb}{\bar{\CL}}
\newcommand{\Vreg}{{V^{\mathrm{reg}}}}
\newcommand{\Enreg}{{E_n^{\mathrm{reg}}}}
\newcommand{\eh}{{e^{2i\pi/h}}}
\newcommand{\tq}{{~|~}}
\newcommand{\ie}{{i.e.}}
\newcommand{\surj}{{\twoheadrightarrow}}
\newcommand{\inj}{{\hookrightarrow}}
\newcommand{\slfrac}[2]{\left.#1\middle/#2\right.}
\newcommand{\DP}[3][]{\frac{\partial^{#1} #2}{\partial #3^{#1}}}
\newcommand{\GL}{\operatorname{GL}\nolimits}
\newcommand{\Red}{\operatorname{Red}\nolimits}
\newcommand{\Cat}{\operatorname{Cat}\nolimits}
\newcommand{\Spec}{\operatorname{Spec}\nolimits}
\newcommand{\Ker}{\operatorname{Ker}\nolimits}
\newcommand{\Disc}{\operatorname{Disc}\nolimits}
\renewcommand{\det}{\operatorname{det}\nolimits}
\renewcommand{\Red}{\operatorname{Red}\nolimits}
\newcommand{\Jac}{\operatorname{Jac}\nolimits}
\newcommand{\Fact}{\operatorname{\textsc{fact}}\nolimits}
\newcommand{\fact}{\operatorname{\underline{facto}}\nolimits}
\newcommand{\LL}{\operatorname{LL}\nolimits}
\newcommand{\Fix}{\operatorname{Fix}\nolimits}
\newcommand{\rk}{\operatorname{rk}\nolimits}
\newcommand{\NC}{\operatorname{NC}\nolimits}
\newcommand{\<}{\preccurlyeq}
\newcommand{\Spram}{\operatorname{Spec_1^{\mathrm{ram}}}\nolimits}
\newcommand{\grad}{\operatorname{grad}\nolimits}
\begin{document}

\title[Lyashko-Looijenga morphisms and factorizations of a Coxeter
element]{Lyashko-Looijenga morphisms and submaximal factorizations of
  a Coxeter element}

\author{Vivien Ripoll}
 \address{LaCIM, UQÀM, CP 8888, Succ. Centre-ville Montréal, QC, H3C
  3P8, Canada}
\email{vivien.ripoll@lacim.ca}

\begin{abstract}
  When $W$ is a finite reflection group, the noncrossing partition
  lattice $\NC(W)$ of type $W$ is a rich combinatorial object,
  extending the notion of noncrossing partitions of an $n$-gon. A
  formula (for which the only known proofs are case-by-case) expresses
  the number of multichains of a given length in $\NC(W)$ as a
  generalized Fu\ss-Catalan number, depending on the invariant degrees
  of $W$. We describe how to understand some specifications of this
  formula in a case-free way, using an interpretation of the chains of
  $\NC(W)$ as fibers of a Lyashko-Looijenga covering ($\LL$),
  constructed from the geometry of the discriminant hypersurface of
  $W$. We study algebraically the map $\LL$, describing the
  factorizations of its discriminant and its Jacobian. As byproducts,
  we generalize a formula stated by K. Saito for real reflection
  groups, and we deduce new enumeration formulas for certain
  factorizations of a Coxeter element of $W$.
\end{abstract}

\maketitle
\section*{Introduction}
\label{sec:intro}

Complex reflection groups are a natural generalization of finite real
reflection groups (that is, finite Coxeter groups realized in their
geometric representation). In this article, we consider a
well-generated complex reflection group $W$; the precise definitions
will be given in Sec.\ \ref{subsec:crg}.

The \emph{noncrossing partition lattice} of type~$W$, denoted
$\NC(W)$, is a particular subset of $W$, endowed with a partial
order~$\<$ called the absolute order (see definition below). When $W$
is a Coxeter group of type $A$, $\NC(W)$ is isomorphic to the poset of
noncrossing partitions of a set, studied by Kreweras
\cite{krew}. Throughout the last 15 years, this structure has been
generalized to finite Coxeter groups, first (Reiner \cite{reiner},
Bessis \cite{Bessisdual}, Brady-Watt \cite{BWordre2}), then to
well-generated complex reflection groups (see \cite{BessisKPi1}). It has
many applications in the algebraic understanding of the braid group of
a reflection group (via the construction of the \emph{dual braid
  monoid}, see \cite{Bessisdual,BessisKPi1}), and is also studied for
itself as a very rich combinatorial object (see Armstrong's memoir
\cite{Armstrong}).

In order to introduce the structure $\NC(W)$, we need several
definitions and notations (which will be detailed in Sec.\
\ref{sec:ncp}):

\begin{itemize}
\item the set $\CR$ of \emph{all} reflections of $W$;
\item the \emph{reflection length} (or absolute length) $\ell$ on $W$: for
  $w$ in $W$, $\ell(w)$ is the minimal length of a word on the
  alphabet $\CR$ that represents $w$;
\item a \emph{Coxeter element} $c$ in $W$;
\item the \emph{absolute order} $\<$ on $W$, defined as \[ u\< v \text{ if and
    only if } \ell(u) + \ell(u^{-1}v)=\ell(v).\]
\end{itemize}

The \emph{noncrossing partition lattice} associated to $(W,c)$
is defined to be the interval below $c$:
\[ \NC(W)= \{w\in W \tq w \< c \}.\]

This lattice has a fascinating combinatorics, and one of its most
amazing properties concerns its \emph{Zeta} polynomial (expressing the
number of multichains of a given length).

\textbf{``Chapoton's formula''}
\emph{Let $W$ be an irreducible, well-generated complex reflection group, of
  rank $n$. Then, for any $p\in \BN$, the number of multichains $w_1 \<
  \dots \< w_{p}$ in the poset $\NC(W)$ is equal~to 
  \[ \Cat^{(p)}(W) = \prod_{i=1}^n \frac{d_i + ph}{d_i} \ ,\] where
  $d_1\leq\dots \leq d_n=h$ are the invariant degrees of $W$ (defined
  in Sec.\ \ref{subsec:crg}).
}

\medskip

The numbers $\Cat^{(p)}(W)$ are called \emph{Fu\ss-Catalan numbers of
  type $W$} (and Catalan numbers for $p=1$). When $W$ is the symmetric
group $\FS_n$, these are the classical Catalan and Fu\ss-Catalan
numbers $\frac{1}{pn+1}\binom{(p+1)n}{n}$. Those generalized
Fu\ss-Catalan numbers also appear in other combinatorial objects
constructed from the group $W$, for example cluster algebras of finite
type introduced in \cite{FominZel} (see Fomin-Reading \cite{gcccc},
and the references therein).

In the real case, this formula was first stated by Chapoton in
\cite[Prop.\ 9]{chapoton}. The proof is case-by-case (using the
classification of finite Coxeter groups), and it mainly uses results
by Athanasiadis and Reiner \cite{reiner,athareiner} (see also
\cite{picantin:explicit}). The remaining complex cases are checked by
Bessis in \cite{BessisKPi1}, using results of
\cite{BessisCorran}. There is still no case-free proof of this
formula, even for the simplest case $p=1$, which states that the
cardinality of $\NC(W)$ is equal to the generalized Catalan number
\[\Cat(W)=\prod_{i=1}^n \frac{d_i + h}{d_i} \ .\]

This very simple formula naturally incites to look for a uniform proof
that could shed light on the mysterious relation between the
combinatorics of $\NC(W)$ and the invariant theory of $W$. This is the
problem which has motivated this work. Roughly speaking, we will bring
a complete geometric (and mainly case-free) understanding of certain
specifications of Chapoton's formula. For geometric reasons (that will
become clear in Sec.\ \ref{subsec:lbl}), we consider strict chains in
$\NC(W)$ of a given length, rather than multichains. In any bounded
posets, their numbers are related to the numbers of multichains by
well known conversion formulas: basically, they are the coefficients
of the \emph{Zeta} polynomial written in the basis of binomial
polynomials (see \cite[Ch.\ 3.11]{stanley}). An alternative way (more
adapted in our work) to look at strict chains in $\NC(W)$ is to
consider \emph{block factorizations} of the Coxeter element~$c$:

\begin{defi}
  \label{def:facto}
  For $c$ a Coxeter element of $W$, $(w_1,\dots, w_p)$ is called a
  \emph{block factorization} of~$c$ if:
  \begin{itemize} 
  \item $\forall i, \ w_i \in W - \{1\}$\ ;
  \item $w_1\dots w_p = c$\ ;
  \item $\ell(w_1)+\dots + \ell(w_p)=\ell(c)$.
  \end{itemize}
 \end{defi}
 
The reflection length of $c$ equals the rank of~$W$, denoted here
 by~$n$. Thus, the maximal number of factors in a block factorization
 is $n$. Note that block factorizations of $c$ have the same
 combinatorics of strict chains of $\NC(W)$: the partial products
 $w_1\dots w_i$, for~$i$ from $1$ to $p$, form a strict chain by
 definition. Thus, using simple computations as explained above,
 we can reformulate Chapoton's formula in terms of these
 factorizations (an explicit formula is given in Appendix B of
 \cite{These}). Proving Chapoton's formula amounts to computing the
 number of block factorizations in $p$ factors, for $p$ from $1$ to
 $n$.

 We call reduced decompositions of $c$ the factorizations of $c$ in
 $n$ reflections, \ie, the most refined block factorizations (the set
 of such factorizations is ususally denoted by $\Red_{\CR}(c)$). The
 reformulation implies in particular that the number of reduced
 decompositions (or, equivalently, the number of maximal strict chains
 in $\NC(W)$) is $n!$ times the leading coefficient of the \emph{Zeta}
 polynomial, that is:
 \[ |\Red_{\CR}(c)| = \frac{n!h^n}{|W|} \ . \]

 Note that this particular formula was known long before Chapoton's
 formula (the real case was dealt with by Deligne in
 \cite{deligneletter}; see \cite[Prop.\ 7.5]{BessisKPi1} for the
 remaining cases). Once again, even for this specific formula, no
 case-free proof is known.

 In \cite{BessisKPi1}, Bessis ---crediting discussions with
 Chapoton--- interpreted this integer $n!h^n/|W|$ as the degree of a
 covering (the \emph{Lyashko-Looijenga covering} $\LL$) constructed
 from the discriminant of $W$, and he described effectively the
 relations between the fibers of this covering and the reduced
 decompositions of~$c$. The aim of this paper is to explain how, by
 studying the map $\LL$ in more detail, we can obtain new enumerative
 results, namely formulas for the number of submaximal factorizations
 of~$c$.

 \begin{theo*}[see Thm.\ \ref{thm:fact} and Cor.\ \ref{cor:submax}]
  \label{thm:factintro}
  Let $W$ be an irreducible, well-generated complex reflection group,
  of rank $n$. Let $c$ be a Coxeter element of $W$ and $\Lambda$ be a
  conjugacy class of elements of reflection length $2$ in
  $\NC(W)$. Then:
  \begin{enumerate}[(a)]
  \item the number of block factorizations of $c$, made up with $n-2$
    reflections and one element in the conjugacy class $\Lambda$, is
    \[ |\Fact_{n-1}^{\Lambda}(c)| = \frac{(n-1)!\ h^{n-1}}{|W|}\ \deg
    D_\Lambda \ ,\] where $D_{\Lambda}$ is an homogeneous polynomial
    (in the $n-1$ first fundamental invariants) attached to $\Lambda$,
    determined by the geometry of the discriminant hypersurface of $W$
    (see Sec.\ \ref{sec:submax});
  \item the total number of block factorizations of $c$ in $n-1$
    factors (or submaximal factorizations) is 
    \[ |\Fact_{n-1}(c)| = \frac{(n-1)!\ h^{n-1}}{|W|}
    \left( \frac{(n-1)(n-2)}{2}h + \sum_{i=1}^{n-1} d_i \right) .\]
  \end{enumerate}
\end{theo*}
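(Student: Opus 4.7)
\emph{Part (a) via geometry.} By the LL correspondence established earlier in the paper, block factorizations of $c$ in $n-1$ factors, whose unique length-$2$ factor lies in the class $\Lambda$, correspond bijectively to pairs $(y,\lambda)$ where $y$ is a generic point of the hypersurface $\{D_\Lambda=0\}$ in $Y=\Spec\BC[f_1,\dots,f_{n-1}]$ and $\lambda$ is a compatible reflection labeling of the (partially degenerate) fiber of~$\LL$ at~$y$. Here $D_\Lambda$ is the component of the LL-discriminant attached to~$\Lambda$ by the previous algebraic study. Above such a~$y$, exactly one pair of roots has collided, of class~$\Lambda$, and a Hurwitz-type argument parallel to the one proving $|\Red_{\CR}(c)|=n!h^n/|W|$ yields $(n-1)!\,h^{n-1}/|W|$ compatible labelings at each such~$y$. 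Multiplying by $\deg D_\Lambda$ (the number of generic hypersurface points seen in a pencil) gives part~(a).

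\emph{Part (b) via degrees of $\FD_\LL$ and $\Jac(\LL)$.} From the factorizations
\[
\FD_\LL=\prod_\Lambda D_\Lambda^{e_\Lambda},\qquad \Jac(\LL)=\prod_\Lambda D_\Lambda^{e_\Lambda-1},
\]
established in the earlier algebraic sections, one obtains $\sum_\Lambda \deg D_\Lambda = \deg\FD_\LL - \deg\Jac(\LL)$. Both degrees are computed from the natural gradings on source and target of~$\LL$: the coordinates $f_1,\dots,f_{n-1}$ on $Y$ carry weights $d_1,\dots,d_{n-1}$, while the coordinates on the target (elementary symmetric functions of the $n$ roots of $\Delta$ in $f_n$, each of weight~$h$) carry weights $2h,3h,\dots,nh$. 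Thus $\deg\FD_\LL = n(n-1)h$ (twice the Vandermonde degree) and $\deg\Jac(\LL) = h(2+3+\dots+n) - \sum_{i=1}^{n-1}d_i$; a routine simplification gives $\deg\FD_\LL - \deg\Jac(\LL) = \frac{(n-1)(n-2)}{2}h + \sum_{i=1}^{n-1}d_i$, and combining with~(a) yields~(b).

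\emph{Main obstacle.} The delicate steps are the adaptation of the generic labeling argument (giving $n!h^n/|W|$) to the codimension-one degeneration $\{D_\Lambda=0\}$, ensuring bijectivity and the correct multiplicity $(n-1)!h^{n-1}/|W|$ per generic point; and the algebraic identification $\Jac(\LL)=\prod_\Lambda D_\Lambda^{e_\Lambda-1}$ used above, which depends on the detailed study of the ramification of~$\LL$ in the earlier sections. Both rest on controlling the interplay between the Coxeter combinatorics of~$W$ and the stratification of $Y$ coming from the geometry of the discriminant.
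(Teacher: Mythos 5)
Your part (b) is essentially the paper's own argument: sum the type-$\Lambda$ counts over $\Lambda\in\Lb_2$, convert $\sum_\Lambda \deg D_\Lambda$ into $\deg D_{\LL}-\deg J_{\LL}$ using the factorizations $D_{\LL}=\prod_\Lambda D_\Lambda^{r_\Lambda}$ and $J_{\LL}\doteq\prod_\Lambda D_\Lambda^{r_\Lambda-1}$ of Thm.\ \ref{thm:LL}, and compute both weighted degrees from the gradings ($\deg a_i=ih$, $\deg f_j=d_j$). That step is correct, and you rightly flag its dependence on the well-ramified analysis.

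The gap is in part (a). You assert two numbers that are never justified, and the way you combine them does not reflect how the count actually arises. First, $\deg D_\Lambda$ throughout is a \emph{weighted} degree (the $f_i$ carry weights $d_i$), so it is not ``the number of generic hypersurface points seen in a pencil''; a generic line meets $\{D_\Lambda=0\}$ in a number of points governed by the ordinary degree, which is a different quantity. Second, there is no covering in sight of degree $(n-1)!\,h^{n-1}/|W|$: the Hurwitz/braiding argument appears in the paper only to identify the ramification index $r_\Lambda$ with $|\Red_\CR(w)|$ in the proof of Thm.\ \ref{thm:LL}(a), not to count factorizations over a point of $\{D_\Lambda=0\}$; and by Property (P3) each point $y$ of $\varphi(\Lambda)^0$ carries exactly \emph{one} factorization $\fact(y)$, not $(n-1)!\,h^{n-1}/|W|$ ``compatible labelings''. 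What is actually needed is the degree of the restricted morphism $\LL_\Lambda:\varphi(\Lambda)\to E_\alpha$, read off from the Hilbert series of the graded extension $\BC[a_2,\dots,a_n]/(D_{\LL})\subseteq\BC[f_1,\dots,f_{n-1}]/(D_\Lambda)$: it equals $\frac{n!\,h^n}{|W|}\cdot\frac{\deg D_\Lambda}{\deg D_{\LL}}=\frac{(n-2)!\,h^{n-1}}{|W|}\deg D_\Lambda$ (using $\deg D_{\LL}=n(n-1)h$). This is the cardinality of a generic fiber, i.e.\ the number of type-$\Lambda$ submaximal factorizations with the length-$2$ block in a \emph{fixed} position; multiplying by the $n-1$ admissible positions gives (a). That Hilbert-series computation is where $\deg D_\Lambda$ genuinely enters, and it is the missing idea in your write-up.
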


The first point is new, and is a refinement of the second which was
already known: like for the number of reduced decompositions, item (b)
is a consequence of Chapoton's formula. The main interest of stating
(b) is that the proof obtained here is geometric and almost case-free
(we still have to rely on some structural properties of $\LL$ proved
in \cite{BessisKPi1} case-by-case). The structure of the proof is
roughly as follows:
\begin{enumerate}
\item we use new geometric properties of the morphism $\LL$ to prove the
  formula of point (a) (Sec.\ \ref{subsec:submaxtype});
\item we find a uniform way to compute $\sum_{\Lambda} \deg D_{\Lambda}$,
  using an algebraic study of the Jacobian of $\LL$ (Sec.\ \ref{subsec:LLwell});
\item we deduce the second formula, since $|\Fact_{n-1}(c)|=\sum_{\Lambda}
  |\Fact_{n-1}^{\Lambda}(c)|$ (Sec.\ \ref{subsec:submax}).
\end{enumerate}

Thus, even if the method used here does not seem easily
generalizable to factorizations with fewer blocks, it is a new
interesting avenue towards a geometric case-free explanation of
Chapoton's formulas.

\begin{remark}
  During step (2) of the proof, we recover a formula proved
  (case-by-case) by K. Saito in \cite{saitopolyhedra} for real groups,
  and extend it for complex groups. This concerns the bifurcation
  locus of the discriminant hypersurface of $W$, the factorization of
  its equation, and the relation with the factorization of the
  Jacobian of $\LL$ (see Sec.\ \ref{subsec:saito}).
\end{remark}

\bigskip

\textbf{Outline.}  In Section \ref{sec:ncp} we give some backgrounds
and notations about complex reflection groups, the noncrossing
partition lattice, and block factorizations of a Coxeter
element. Section \ref{sec:LL} is devoted to the construction and
properties of the Lyashko-Looijenga covering of type $W$, and in
particular its relation with factorizations. 

Section \ref{sec:LLext} is the core of the proof: we study further the
algebraic properties of the morphism~$\LL$, we show that it gives rise
to a ``well-ramified'' polynomial extension, we derive factorizations
of its Jacobian and its discriminant into irreducibles. We also list
the analogies between the properties of $\LL$ extensions and those of
Galois extensions. In Section \ref{sec:submax} we use these results to
deduce the announced formulas for the number of submaximal
factorizations of a Coxeter element. We conclude in the last section
by giving a list of numerical data about these factorizations for each
irreducible well-generated complex reflection group.

\section{The noncrossing partition lattice of type $W$ and block
  factorizations of a Coxeter element}
\label{sec:ncp}
\subsection{Complex reflection groups}
~\medskip

\label{subsec:crg}

First we recall some notations and definitions about complex
reflection groups. For more details we refer the reader to the books
\cite{Kane} and \cite{unitary}.

For $V$ a finite dimensional complex vector space, we call a
\emph{reflection} of $\GL(V)$ an automorphism $r$ of $V$ of
finite order and such that the invariant space $\Ker(r-1)$ is a
hyperplane of~$V$ (it is called \emph{pseudo-reflection} by
some authors). We call a \emph{complex reflection group} a finite
subgroup of $\GL(V)$ generated by reflections.

A simple way to construct such a group is to take a finite real
reflection group (or, equivalently, a finite Coxeter group together
with its natural geometric realization) and to complexify it. There
are of course many other examples that cannot be seen in a real
space. A complete classification of irreducible complex reflection
groups was given by Shephard-Todd in \cite{Shephard}: it consists of
an infinite series with three parameters and $34$ exceptional groups
of small ranks.

\medskip

Throughout this paper we denote by $W$ a subgroup of $\GL(V)$ which is
a complex reflection group. Note that for real reflection groups the
results presented here are already interesting (and, most of them,
new).

We suppose that $W$ is irreducible of rank $n$ (\ie, the linear action
on $V$ is irreducible, and the dimension of $V$ is $n$). If
$(v_1,\dots, v_n)$ denotes a basis for $V$, $W$ acts naturally on the
polynomial algebra
$\BC[V]=\BC[v_1,\dots,v_n]$. Chevalley-Shephard-Todd's theorem implies
that the invariant algebra $\BC[V]^W$ is again a polynomial algebra,
and it can be generated by $n$ algebraically independent homogeneous
polynomials $f_1,\dots, f_n$ (called the \emph{fundamental
  invariants}). The degrees $d_1,\dots, d_n$ of these invariants do
not depend on the choices for the $f_i$'s (if we require $d_1 \leq
\dots \leq d_n$) and they are called the \emph{invariant degrees}
of~$W$. Like for finite Coxeter groups, we will denote by $h$ the
highest degree $d_n$ (called the Coxeter number of $W$).

We will also require that $W$ is \emph{well-generated} , \ie, it can be
generated by $n$ reflections (this is always verified in the real
case). Then there exist in $W$ so-called \emph{Coxeter elements},
which generalize the usual notion of a Coxeter element in finite Coxeter
groups.

\begin{defi}
  \label{def:coxelt}
  A \emph{Coxeter element}~$c$ of $W$ is an $\eh$-regular element (in
  the sense of Springer's regularity \cite{Springer}), \ie, it is such
  that there exists a vector $v$, outside the reflecting hyperplanes,
  such that $c(v)=\eh v$.
\end{defi}

As in the real case, Coxeter elements have reflection length $n$, and
form a conjugacy class of $W$.

\subsection{The noncrossing partition lattice of type $W$}
\label{subsec:ncp}
~\medskip

Recall that $\CR$ denotes the set of all reflections of $W$. For $w$
in $W$, the \emph{reflection length} (or \emph{absolute length}) of $w$ is:
\[ \ell(w)= \min \{p\in \BN \tq \exists r_1,\dots r_p \in \CR,\
w=r_1\dots r_p \} \ .\]

This length is not to be confused with the usual length in Coxeter
groups (called weak length, relative to the generating set of simple
reflections), which can be defined only in the real case.

The noncrossing partition lattice is constructed from the absolute
order, which is the natural prefix order for the reflection length:

\begin{defi}
  \label{def:order}
  We denote by $\<$ the absolute order on $W$, defined by:
  \[ u\< v \text{ if and only if } \ell(u) + \ell(u^{-1}v)=\ell(v).\] If
  $c$ is a Coxeter element of $W$, the \emph{noncrossing partition
    lattice} of $(W,c)$ is:
  \[ \NC(W;c)= \{w\in W \tq w \< c \}.\]
\end{defi}

Since all the Coxeter elements are conjugate, and the reflection
length is invariant under conjugation, the structure of $\NC(W;c)$
does not depend on the choice of the Coxeter element~$c$. Thus we will
just write $\NC(W)$ for short, considering $c$ fixed for the rest of
the paper. In the prototypal case of type $A$, where $W$ is the
symmetric group $\FS_{n+1}$, $\CR$ is the set of all transpositions
and $c$ is an $n+1$-cycle; then $\NC(W)$ is isomorphic to the set of
noncrossing partitions of an $n+1$-gon, as introduced by Kreweras in
\cite{krew}. In general, the noncrossing partition lattice of type $W$
has a very rich combinatorial structure: we refer to Chapter 1 of
\cite{Armstrong} or the introduction of \cite{These}.

\subsection{Multichains in $\NC(W)$ and block factorizations of a
  Coxeter element}
\label{subsec:multichains}
~\medskip

Recall from Def.\ \ref{def:facto} that a \emph{block factorization} of
$c$ is a factorization in nontrivial factors, such that the lengths of
the factors add up to the length of $c$ (\ie, it exists reduced
decompositions of $c$ obtained from concatenation of reduced
decompositions of the blocks).

We denote by $\Fact(c)$ (resp. $\Fact_p(c)$) the set of block
factorizations of $c$ (resp. factorizations in $p$ factors).  Note
that the length of $c$ is equal to the rank $n$ of $W$, so any block
factorization of~$c$ determines a \emph{composition} (ordered
partition) of the integer $n$. The set $\Fact_n(c)$ corresponds to the
set of reduced decompositions of $c$ into reflections, usually denoted
by~$\Red_{\CR}(c)$ (composition $(1,1,\dots, 1)$).

To simplify we will write, from now on, \emph{factorization} for block
factorization.

\medskip

If $(w_1,\dots, w_p)$ is a factorization of $c$, then we canonically
get a (strict) chain in $\NC(W)$:
\[ w_1 \prec w_1w_2 \prec \dots \prec w_1\dots w_p=c.\] Strict chains
are related to multichains by known formulas, so that we can pass from
enumeration of multichains in $\NC(W)$ to enumeration of factorizations of $c$, and vice versa (see for example \cite[App.\
B]{These} or \cite[Ch.\ 3.11]{stanley}).

\bigskip

In the following section, we describe a geometric construction of
these factorizations, and how they are related to the fibers of
a topological covering.

\section{Lyashko-Looijenga covering and factorizations of a 
Coxeter element}
\label{sec:LL}

\subsection{Discriminant of a well-generated reflection group and
  Lyashko-Looijenga covering}
\label{subsec:LL}
~\medskip

Let $W$ be a well-generated, irreducible complex reflection group, with
invariant polynomials $f_1,\dots, f_n$, homogeneous of degrees
$d_1\leq \dots \leq d_n=h$. Note that the quotient-space\footnote{The
  action of $W$ on $V$ is conventionally on the left side, so we
  prefer to write the quotient-space $W \qg V$.} $W \qg V$ is then
isomorphic to $\BC^n$:
\[\begin{array}{rcl}
W \qg V &\xrightarrow{\sim}& \BC^n\\
\bar{v} & \mapsto & (f_1(v),\dots, f_n(v))
\end{array}
\]

We recall here the construction of the Lyashko-Looijenga map of type
$W$ (for more details, see \cite[Sec.~5]{BessisKPi1} or \cite[Sec.\
3]{Ripollfacto}).

\medskip

Let us denote by $\CA$ the set of all reflecting hyperplanes of $W$,
and consider the discriminant of $W$ defined by
\[ \Delta_W := \prod_{H\in \CA} \alpha_H^{e_H} \ ,\] where $\alpha_H$
is an equation of $H$ and $e_H$ is the order of the parabolic subgroup
$W_H=\Fix(H)$. The discriminant lies in $\BC[V]^W=\BC[f_1,\dots,
f_n]$, and it is an equation for the discriminant hypersurface
\[ \CH:=W \ \qg \bigcup_{H\in \CA} H \ \subseteq W \qg V \simeq \BC^n \
.\]

It is known (see \cite[Thm.\ 2.4]{BessisKPi1}) that when $W$ is
well-generated, the fundamental invariants $f_1,\dots, f_n$ can be
chosen such that the discriminant of $W$ is a monic polynomial of
degree~$n$ in~$f_n$ of the form:
\[\Delta_W= f_n^n + a_2 f_n ^{n-2} +\dots + a_n \ ,\]
where $a_i \in \BC[f_1,\dots, f_{n-1}]$. This property implies that if
we fix $f_1, \dots, f_{n-1}$, then $\Delta_W$ always has $n$ roots
(counting multiplicities) as a polynomial in $f_n$.

Let us define $ Y:= \Spec \BC[f_1,\dots, f_{n-1}] \simeq \BC^{n-1} $,
so that $W \qg V \simeq Y \times \BC$.  Then the geometric version of
the property given above is that the intersection of the hypersurface
$\CH$ with the complex line $\{(y,f_n)\tq f_n\in \BC\}$ (for a fixed
$y\in Y$) generically has cardinality $n$. The definition of the
Lyashko-Looijenga map comes from these observations.

\begin{defi}
  We denote by $E_n$ be the set of centered configurations of $n$
  points in $\BC$, \ie,
  \[ E_n:= H_0/\FS_n \text{ , where }H_0=\Big\{(x_1,\dots, x_n)\in
    \BC^n ~\Big| ~ \sum_{i=1}^n x_i =0\Big\}.\] The Lyashko-Looijenga map
  of type $W$ is defined by:
  \[\begin{array}{rcl}
  Y & \xrightarrow{\LL} & E_n\\
    y=(f_1,\dots, f_{n-1}) & \mapsto & \text{multiset\ of\ roots\ of\ }
    \Delta_W(f_1,\dots, f_n) \text{ in the variable } f_n.
  \end{array}\]
\end{defi}

\begin{remark}
\label{rk:algLL}
We can also regard $\LL$ as an algebraic morphism. Indeed, the natural
coordinates for $E_n$ as an algebraic variety are the $n-1$ elementary
symmetric polynomials $e_2(x_1\dots, x_n), \dots, e_n(x_1,\dots,
x_n)$. Thus, the algebraic version of the map $\LL$ is (up to some
unimportant signs) simply the morphism
  \[\begin{array}{ccc}
    \BC^{n-1} & \to & \BC^{n-1}\\
    (f_1,\dots, f_{n-1}) & \mapsto & (a_2(f_1,\dots, f_{n-1})\ ,\
    \dots\ ,\  a_n(f_1, \dots, f_{n-1}))\ .
  \end{array} \]
  To shorten the notations, we will also denote this morphism by $\LL$ ,
  whenever in an algebraic context (mainly in Sec.\ \ref{sec:LLext}).
\end{remark}

We denote by $\Enreg$ the set of configurations in $E_n$ with $n$
distinct points, and we define the bifurcation locus of $\LL$, namely
$\CK:=\LL^{-1}(E_n-\Enreg)$. Equivalently, we have
\[\CK:= \{y \in Y \tq D_{\LL}(y)=0\},\]
where $D_{\LL}$ is called the $\LL$-discriminant and is defined by:
\[ D_{\LL} := \Disc(\Delta_W(y,f_n)\ ;\ f_n) \ \in \BC[f_1,\dots,
f_{n-1}].\]

\begin{example}
\label{ex:A3_0}
\begin{figure}[!h]
\centering

\resizebox{12cm}{!}{\input{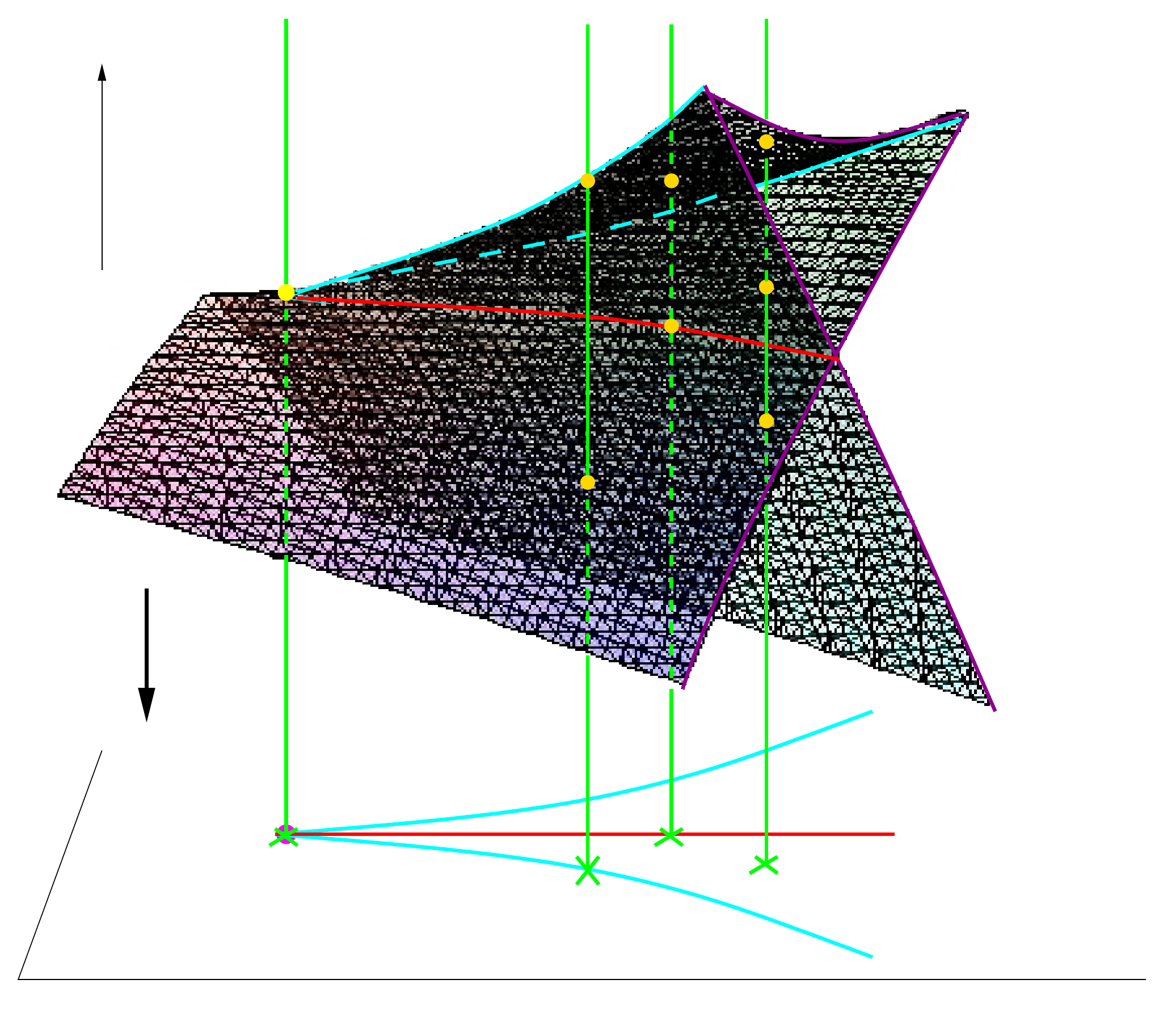_t}}
\caption{Example of $W(A_3)$. The picture represents a fragment of the
  real part of the discriminant hypersurface $\CH$ (equation
  $\Disc(T^4+f_1T^2-f_2 T +f_3 \ ;\ T)=0$, called the swallowtail
  hypersurface), as well as its bifurcation locus $\CK$. The vertical
  is chosen to be the direction of $f_n$. The other information is
  described gradually in Examples \ref{ex:A3_0}, \ref{ex:A3}, and
  \ref{ex:A3_2}.}
\label{A3}
\end{figure}
  The picture of Fig.\ \ref{A3} gives a simplified geometric view of
  what happens for the group $W(A_3)$. The discriminant hypersurface
  $\CH$ and the bifurcation locus $\CK$ are described. The map $\LL$
  associates to any point in $Y$ the multiset of intersection points
  of the line $\{(y,x)\tq x \in \BC\}$ (vertical green lines )
  with~$\CH$ (yellow points).
\end{example}

The first important property is the following (from \cite[Thm.\
5.3]{BessisKPi1}):
\begin{equation}
\text{The restriction of }\LL: Y-\CK \surj \Enreg \text{ is a
  topological covering of degree }\frac{n!h^n}{|W|}\ .
\label{eq:degLL}
\tag{P0}
\end{equation}

We call this integer the \emph{Lyashko-Looijenga number of type $W$}.

\subsection{Geometric construction of factorizations}
~\medskip

\label{subsec:lbl}
Before explaining the construction of factorizations from the
discriminant hypersurface, we recall some useful properties of the
geometric stratification associated to the parabolic subgroups of $W$.
\medskip

\noindent \textbf{Discriminant stratification.}
The space $V$, together with the hyperplane arrangement $\CA$, admits
a natural stratification by the \emph{flats}, namely, the elements of the
intersection lattice $ \CL:=\left\{\bigcap_{H\in \CB} H \tq \CB
  \subseteq \CA \right\} $.

As the $W$-action on $V$ maps a flat to a flat, this stratification
gives rise to a quotient stratification $\Lb$ of $W \qg V$: \[\Lb= W
\qg \CL =(p(L))_{L \in \CL}= (W\cdot L)_{L \in \CL} \ ,\] where $p$ is
the projection $V \ \surj \ W \qg V $.  For each stratum $\Lambda$ in
$\Lb$, we denote by $\Lambda^0$ the complement in $\Lambda$ of the
union of the strata strictly included in $\Lambda$. The
family $(\Lambda^0)_{\Lambda \in \Lb}$ form an open stratification of $W \qg
V$, called the \emph{discriminant stratification}.

\medskip

There is a natural bijection between the set of flats in $V$ and the
set of parabolic subgroups of $W$ (Steinberg's theorem). By
quotienting by the action of $W$, this leads to
other descriptions of the stratification $\Lb$:

\begin{propo}
  \label{prop:parab}
  The set $\Lb$ is in canonical bijection with:
  \begin{itemize}
  \item the set of conjugacy classes of parabolic subgroups of $W$;
  \item the set of conjugacy classes of \emph{parabolic Coxeter elements}
    (\ie, Coxeter elements of parabolic subgroups);
  \item the set of conjugacy classes of elements of $\NC(W)$.
  \end{itemize}

  Through these bijections, the codimension of a stratum $\Lambda$
  corresponds to the rank of the associated parabolic subgroup and to
  the reflection length of the parabolic Coxeter element.
\end{propo}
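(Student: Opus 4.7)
The plan is to chain three canonical bijections and then read off the codimension statement; the first two bijections are essentially formal, while the third is a substantive result of Bessis.

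First, Steinberg's theorem for complex reflection groups states that for each flat $L\in \CL$ the pointwise stabilizer $W_L := \{w\in W \tq w|_L = \mathrm{id}\}$ is generated by the reflections whose hyperplane contains $L$; conversely every parabolic subgroup of $W$ arises uniquely in this way, with $\rk W_L = \codim L$. The assignment $L\mapsto W_L$ is $W$-equivariant (with $W$ acting on parabolics by conjugation), so quotienting by $W$ yields the first bijection between $\Lb$ and conjugacy classes of parabolic subgroups.

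For the second bijection, I use that every parabolic subgroup $P=W_L$ of a well-generated $W$ is itself well-generated on its support (a complement of $L$), and so admits Coxeter elements in the sense of Def.\ \ref{def:coxelt}; these form a single conjugacy class within $P$. Conversely, a parabolic Coxeter element $w\in P$ fixes $L$ pointwise and acts without nonzero fixed vector on a complement of $L$, so $\Fix(w)=L$ and $P=W_{\Fix(w)}$ is recovered from $w$. Passing to $W$-conjugacy on both sides then yields the claimed bijection.

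For the third bijection, I invoke the structural theorem of Bessis (see \cite[Sec.\ 6--7]{BessisKPi1}): for well-generated $W$ and a Coxeter element $c$, an element $w\in W$ lies in $\NC(W;c)$ if and only if $w$ is a parabolic Coxeter element of $W$, and every $W$-conjugacy class of parabolic Coxeter elements meets $\NC(W;c)$. This identifies $W$-conjugacy classes of parabolic Coxeter elements with conjugacy classes in $\NC(W)$. The final clause of the proposition is then automatic: $\codim \Lambda = \rk W_L$ by Steinberg, a rank-$k$ Coxeter element of $P$ has reflection length $k$ within $P$, and $\ell_W(w)=\ell_P(w)$ because every reflection in a reduced $\CR$-expression of $w$ must fix $\Fix(w)$ pointwise (a classical property of the absolute order), hence lies in $W_{\Fix(w)}=P$.

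The real content, and therefore the main obstacle, is Bijection~3; its proof in \cite{BessisKPi1} combines geometric arguments with a case analysis via the Shephard--Todd classification for the exceptional well-generated complex groups, whereas the remaining steps are formal consequences of Steinberg's theorem and the well-generation of parabolic subgroups.
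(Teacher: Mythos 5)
The paper gives no proof of this proposition---it defers entirely to \cite[Sec.\ 6]{Ripollfacto}---and your reconstruction follows exactly the route taken there: Steinberg's theorem for the first bijection, well-generation of parabolic subgroups plus recovery of $P=W_{\Fix(w)}$ for the second, and the Bessis--Ripoll characterization of noncrossing elements as parabolic Coxeter elements for the third, so the approach is essentially the same as the cited source. One imprecision worth fixing: your invoked statement ``$w$ lies in $\NC(W;c)$ \emph{if and only if} $w$ is a parabolic Coxeter element'' is false in the ``if'' direction for a fixed $c$ (in $\FS_4$ with $c=(1\,2\,3\,4)$, the parabolic Coxeter element $(1\,3)(2\,4)$ does not divide $c$); the correct statement is the ``only if'' direction together with the fact that every conjugacy class of parabolic Coxeter elements meets $\NC(W;c)$, which is what you actually use, so the argument at the level of conjugacy classes is unaffected. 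You are also right to flag that the substantive inputs (well-generation of parabolics, the characterization of $\NC(W)$) rest partly on case-by-case verifications in \cite{BessisKPi1}.
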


We refer to \cite[Sec.\ 6]{Ripollfacto} for details and proofs.

\begin{example}
  \label{ex:A3}
  In the picture of Fig.\ \ref{A3}, the two strata of $\Lb$ of
  codimension 2 are drawn in red and blue (the blue one is the one
  forming a cusp). Through the bijection of Prop.\ \ref{prop:parab},
  the blue one corresponds to the conjugacy class of a parabolic
  Coxeter element of type $A_2$ (viewed in $\FS_4$, this is a
  $3$-cycle), and the red one corresponds to the conjugacy class of a
  parabolic Coxeter element of type $A_1\times A_1$ (\ie, a product of
  two commuting transpositions in $\FS_4$).
\end{example}

\medskip

\noindent \textbf{Geometric factorizations and compatibilities.}  In
\cite{Ripollfacto} we established a way to construct factorizations
geometrically from the discriminant hypersurface $\CH$. We describe
below the idea of the construction, and some of its properties; for
details and proofs, see \cite[Sec.\ 4]{Ripollfacto} and \cite[Sec.\
6]{BessisKPi1}.

The starting point is the construction of a map
\[\begin{array}{rrcl}
  \rho : &\CH & \to & W \\
  &(y,x) & \mapsto & c_{y,x} \ ,
\end{array}
\]
by the following steps (note that $(y,x)$ lies in $\CH$
if and only if the multiset $\LL(y)$ contains $x$).
\begin{enumerate}[1.]
\item Consider a small loop in $\BC^n - \CH$, which always stays in
  the fiber $\{(y,t), t\in \BC\}$, and which turns once around $x$ (but not
  around any other $x'$ in $\LL(y)$).
\item This loop determines an element $b_{y,x}$ of $\pi_1(\BC^n -
  \CH)=\pi_1(\Vreg / W)$, which is the braid group $B(W)$ of $W$.
\item Send $b_{y,x}$ to $c_{y,x}$ via a fixed surjection
  $B(W) \surj W$.
\end{enumerate}

The map $\rho$ has the following fundamental properties.

\begin{enumerate}[(P1)]
\item\label{item:P1} If $(x_1,\dots, x_p)$ is the ordered support of $\LL(y)$
  (for the lexicographical order on $\BC \simeq \BR^2$), then the
  $p$-tuple $(c_{y,x_1}, \dots, c_{y,x_p})$ lies in $\Fact_p(c)$.
\item\label{item:P2} For all $x \in \LL(y)$, $c_{y,x}$ is a parabolic
  Coxeter element; its length is equal to the multiplicity of $x$ in
  $\LL(y)$, and its conjugacy class corresponds (via the
  bijection of Prop.\ \ref{prop:parab}) to the unique stratum
  $\Lambda$ in $\Lb$ such that $(y,x)\in \Lambda^0$.
\end{enumerate}

According to Property \hyperref[item:P1]{(P1)}, we call the tuple
$(c_{y,x_1}, \dots, c_{y,x_p})$ (where $(x_1,\dots, x_p)$ is the
ordered support of $\LL(y)$) the \emph{factorization of $c$ associated
  to $y$}, and we denote it by $\fact(y)$.

Any block factorization determines a composition of $n$. To any
configuration of $E_n$ we can also associate a composition of $n$,
formed by the multiplicities of its elements in the lexicographical
order. Then Property \hyperref[item:P2]{(P2)} implies that for any $y$
in $Y$, the compositions associated to $\LL(y)$ and $\fact(y)$ are the
same. The third fundamental property (see \cite[Thm.\
5.1]{Ripollfacto} or \cite[Thm.\ 7.9]{BessisKPi1}) is the following.
\begin{enumerate}[(P3)]
\item \label{item:P3} The map $\LL \times \fact : Y \to E_n
  \times \Fact(c)$ is injective, and its image is the entire set of
  compatible pairs (\ie, pairs with same associated composition).
\end{enumerate}

In other words, for each $y\in Y$, the fiber $\LL^{-1}(\LL(y))$ is in
bijection (via $\fact$) with the set of factorizations whose
associated composition of $n$ is the same as that associated to
$\fact(y)$. This fundamental property is a reformulation of a theorem
by Bessis; the proof still relies on some case-by-case analysis.

\section{Lyashko-Looijenga extensions}
\label{sec:LLext}

Property \hyperref[item:P3]{(P3)} is particularly helpful to compute
algebraically certain classes of factorizations. For example, if $y$
lies in $Y-\CK$, then $\fact(y)$ is in $\Fact_n(c)$ (in other words,
it is a reduced decomposition of $c$), \ie, the associated composition
is $(1,1,\dots, 1)$. Thus, from \hyperref[item:P3]{(P3)}, the
set~$\Red_{\CR}(c)$ is in bijection with any generic fiber of $\LL$
(the fiber of any point in $\Enreg$), so it has cardinality $n!\ h^n/\
|W|$, because of Property \hyperref[eq:degLL]{(P0)}. Note that this
number has been computed algebraically, using the fact that the
algebraic morphism $\LL$ is ``weighted-homogeneous''.

In order to go further and count more complicated factorizations of
$c$, we need a more precise algebraic study of the morphism $\LL$, in
particular its restriction to the bifurcation locus $\CK$.

\subsection{\texorpdfstring{Ramification locus for $\LL$}{Ramification locus for LL}} 
~\medskip
\label{subsec:LLram}

Let us first explain the reason why $\LL$ is étale on $Y-\CK$ (as
stated in Property \hyperref[eq:degLL]{(P0)}), where we recall:
\[\CK = \{y\in Y \tq \text{ the multiset } \LL(y) \text{ has multiple
  points} \} \ .\] The argument goes back to Looijenga (in
\cite{looijenga}), and is used without details in the proof of Lemma 5.6 of
\cite{BessisKPi1}.

\medskip

We begin with a more general setting. Let $n\geq 1$, and $P \in
\BC[T_1,\dots, T_n]$ of the form:
\[ P= T_n^n + a_2 (T_1,\dots, T_{n-1}) T_n^{n-2} + \dots +
a_n(T_1,\dots T_{n-1}) \] (here the polynomials $a_i$ do not need to
be quasi-homogeneous). As in the case of $\LL$ we define the
hypersurface $\CH := \{P=0\} \subseteq \BC^n$, and a map $\psi :
\BC^{n-1} \to E_n$, sending ${y=(T_1,\dots, T_{n-1})\in \BC^{n-1}}$ to
the multiset of roots of $P(y,T_n)$ (as a polynomial in $T_n$). This
map can also be considered as the morphism $y\mapsto (a_2(y),\dots,
a_n(y))$.

We set:
\[J_\psi(y)=\Jac((a_2,\dots,a_n) / y)=\det \left(
  \frac{\partial a_i}{\partial T_j} \right)_{\substack{2 \leq i \leq n\\ 1\leq j \leq
  n -1}} \]

\begin{propo}[after Looijenga]
  \label{propgeneralposition}
  With the notations above, let $y$ be a point in $\BC^{n-1}$,
  with~$\psi(y)$ being the multiset $\{x_1,\dots, x_n \}$. Suppose
  that the $x_i$'s are pairwise distinct.

  Then the points $(y,x_i)$ are regular on $\CH$. Moreover, the $n$
  hyperplanes tangent to $\CH$ at $(y,x_1), \dots , (y,x_n)$ are in
  general position if and only if $J_\psi(y)\neq 0$ (\ie, $\psi$ is
  étale at $y$).
\end{propo}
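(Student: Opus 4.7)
The plan is to reduce both claims to one matrix computation that expresses the gradient matrix at the points $(y,x_i)$ as a product of a Vandermonde factor and a block matrix whose determinant collapses to $J_\psi(y)$.

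Regularity of the $(y,x_i)$ on $\CH$ is immediate. Since, for fixed $y$, $P(y,T_n)=\prod_l(T_n-x_l)$, the partial derivative $(\partial P/\partial T_n)(y,x_i)=\prod_{l\ne i}(x_i-x_l)$ is nonzero by the hypothesis that the $x_i$ are distinct. Hence $\nabla P(y,x_i)\ne 0$, and the tangent hyperplane to $\CH$ at $(y,x_i)$ is its kernel in $\BC^n$.

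For the equivalence, general position of the $n$ tangent hyperplanes is equivalent to the nonvanishing of $\det M$, where $M$ is the $n\times n$ matrix whose $i$-th row is $\nabla P(y,x_i)$. Reading off partial derivatives, one gets $M_{i,j}=\sum_{k=2}^n (\partial a_k/\partial T_j)(y)\, x_i^{n-k}$ for $j\le n-1$, and $M_{i,n}=nx_i^{n-1}+\sum_{k=2}^{n-1}(n-k)a_k(y)\, x_i^{n-k-1}$. The key step is to factor $M=\bar X\cdot N$, where $\bar X$ is the full $n\times n$ Vandermonde matrix $\bar X_{i,k}=x_i^{n-k}$ ($1\le k\le n$), and $N$ is the $n\times n$ matrix whose columns $1,\dots,n-1$ are $(0,\partial a_2/\partial T_j,\dots,\partial a_n/\partial T_j)^T$ and whose last column is $(n,0,(n-2)a_2,(n-3)a_3,\dots,a_{n-1})^T$. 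Crucially, the first row of $N$ is $(0,\dots,0,n)$, precisely because $P$ has no $T_n^{n-1}$ term.

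Expanding $\det N$ along its first row then yields $\det N=(-1)^{n+1}n\cdot J_\psi(y)$, since the complementary minor is exactly the Jacobian of $(a_2,\dots,a_n)$ with respect to $(T_1,\dots,T_{n-1})$. Combined with the classical identity $\det\bar X=\pm\prod_{i<j}(x_j-x_i)\ne 0$, this gives $\det M=\pm n\cdot J_\psi(y)\prod_{i<j}(x_j-x_i)$, whence the announced equivalence. The only real obstacle is the bookkeeping of the factorization $M=\bar X\cdot N$; once the vanishing of the $T_n^{n-1}$-coefficient of $P$ is exploited to isolate a single $n$ in the first row of $N$, the cofactor expansion finishes the computation.
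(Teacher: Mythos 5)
Your argument is correct and follows essentially the same route as the paper: the same regularity observation via simple roots, and the same factorization of the gradient matrix into a Vandermonde factor times a matrix whose determinant is $\pm n\,J_\psi(y)$ (your $M=\bar X\cdot N$ is just the transpose of the paper's $M_y=A_yV_y$, with the cofactor expansion made explicit). Nothing is missing.
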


\begin{proof}
  Let $\alpha$ be a point in $\CH$. If it exists, the hyperplane tangent to
  $\CH$ at $\alpha$ is directed by its normal vector: $ \grad_{\alpha}P =
  \left( \frac{\partial P}{\partial T_1}(\alpha), \dots, \frac{\partial
      P}{\partial T_n}(\alpha) \right)$.
 
  Let $y$ be a point in $\BC^{n-1}$ such that the $x_i$'s associated are
  pairwise dictinct. Then the polynomial in $T_n$, $P(y,T_n)$ has the
  $x_i$'s as simple roots, so for each $i$, $\frac{\partial P}{\partial
    T_n}(y,x_i) \neq 0$, and the point $(y,x_i)$ is regular on $\CH$.

  The tangent hyperplanes associated to $y$ are in general position if and
  only if $\det M_y \neq 0$, where $M_y$ is the matrix with columns:
  \[ \left(\grad_{(y,x_1)} P \ ;\ \dots \ ;\ \grad_{(y,x_n)} P\right).\]
  
  After computation, we get: $M_y= A_y V_y$, where

\[
A_y=\left[
  \begin{array}{c|c}
    \begin{array}{c} 0 \\ \vdots \\ 0 \end{array}
    & 
    \displaystyle{\left( \frac{\partial a_j}{\partial T_i} \right)_{\substack{1\leq i
            \leq n-1 \\ 2\leq j \leq n}} }
        \\
        \hline
     n  
     &
     \begin{array}{cccc}  0  &  (n-2)a_2(y)  &  \dots  &  a_{n-1}(y) \end{array}
   \end{array}
 \right]
 \text{\ and\ }
 V_y=\left[
   \begin{array}{ccc}
     x_1^{n-1} & \dots & x_n^{n-1}\\
     \vdots & \ddots & \vdots \\
     x_1 & \dots & x_n\\
     1 & \dots & 1
   \end{array}
 \right] .
\]

  As the $x_i$'s are distinct, the Vandermonde matrix $V_y$ is
  invertible. As $\det A_y = n J_{\psi}(y)$, we can conclude that $\det M_y \neq
  0$ if and only if $J_\psi(y)\neq 0$.
\end{proof}

If the $x_i$'s are not distinct, nothing can be said in general. But if
$\psi$ is a Lyashko-Looijenga morphism $\LL$, then we can deduce the
following property.

\begin{coro}
  \label{coroetale}
  Let $y$ be a point in $\BC^{n-1}$, and suppose that $\LL(y)$ contains $n$
  distinct points. Then $J_{\LL}(y)\neq 0$.

  In other words, $\LL$ is étale on (at least) $Y-\CK$.
\end{coro}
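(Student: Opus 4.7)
The plan is to derive the corollary from Proposition~\ref{propgeneralposition} applied to $\psi = \LL$, supplemented by a reflection-group-specific general-position argument. First, I would check that $\LL$ fits the proposition's hypothesis verbatim: the well-generated assumption on $W$, together with Bessis's normalization of the fundamental invariants recalled in Sec.~\ref{subsec:LL}, guarantees that $P = \Delta_W$ has the prescribed monic shape $f_n^n + a_2 f_n^{n-2} + \dots + a_n$, and Rmk.~\ref{rk:algLL} identifies $\LL$ with the algebraic map $y \mapsto (a_2(y),\dots,a_n(y))$. The proposition then immediately reduces the claim $J_\LL(y) \neq 0$ to showing that the $n$ tangent hyperplanes $T_{(y,x_i)}\CH$ are in general position in $\BC^n$.

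To establish this general-position condition I would exploit the geometric construction of factorizations from Sec.~\ref{subsec:lbl}. Each simple root $x_i$ of $\Delta_W(y,\cdot)$ places $(y,x_i)$ in the smooth locus of $\CH$, hence (by Prop.~\ref{prop:parab}) in the open part of a codimension-$1$ stratum of $\Lb$, so the tangent hyperplane there is the image under the quotient $V \to V/W$ of a reflecting hyperplane $H_i \in \CA$. Properties \hyperref[item:P1]{(P1)}--\hyperref[item:P2]{(P2)} then identify the associated reflections $r_{H_i} \in W$ with the components of $\fact(y)$, and since $\LL(y) \in \Enreg$ the tuple $(r_{H_1},\dots,r_{H_n})$ is a reduced reflection decomposition of the Coxeter element~$c$. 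A standard fact about Coxeter elements---that the reflections appearing in any such reduced decomposition generate $W$---then forces the $H_i$ to meet only at the origin in $V$, i.e.\ to be in general position.

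The main obstacle is the final step: translating general position of the $H_i$'s in $V$ into general position of the tangent hyperplanes $T_{(y,x_i)}\CH$ in the quotient $V/W$, which is delicate because the quotient map is ramified along each $H_i$. The cleanest route I see is to work inside the matrix identity $M_y = A_y V_y$ established in the proof of Proposition~\ref{propgeneralposition}: since $V_y$ is an invertible Vandermonde matrix (the $x_i$'s being distinct), it suffices to prove that the columns of $M_y$---the gradients $\grad_{(y,x_i)}\Delta_W$---are linearly independent, and expressing these gradients via the chain rule in terms of the linear equations $\alpha_{H_i}$ of the reflecting hyperplanes reduces the required independence to the already-established general position of the $H_i$'s in $V$. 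Applying Proposition~\ref{propgeneralposition} then yields $J_\LL(y) \neq 0$, and the global étaleness of $\LL$ on $Y - \CK$ follows.
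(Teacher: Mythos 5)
Your first half matches the paper exactly: the paper's proof also applies Proposition~\ref{propgeneralposition} to $\psi=\LL$ and reduces everything to showing that the $n$ tangent hyperplanes $T_{(y,x_i)}\CH$ are in general position. Where you diverge is in how that general position is established. The paper does \emph{not} argue via reduced decompositions of $c$; it invokes the characterization of these tangent hyperplanes in terms of the basic derivations of $W$ and defers to the proof of Lemma~5.6 of \cite{BessisKPi1}, where the general position is checked directly from that description. Your alternative route --- identify each $T_{(y,x_i)}\CH$ with a reflecting hyperplane $H_i$, observe that $(r_{H_1},\dots,r_{H_n})\in\Red_\CR(c)$ forces $\bigcap_i H_i=\Fix(c)=\{0\}$, and then ``descend'' --- founders exactly at the step you yourself flag as delicate, and the fix you propose does not work. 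The chain rule at $(y,x_i)$ is degenerate: writing $p:V\to W\qg V$ and $\delta=\Delta_W\circ p=\prod_H\alpha_H^{e_H}$, one has $\grad_{v_i}\delta=(Dp_{v_i})^{T}\,\grad_{(y,x_i)}\Delta_W=0$ because $e_{H_i}\ge 2$, so the identity only says that $\grad_{(y,x_i)}\Delta_W$ annihilates $\im Dp_{v_i}$; it does not express $\grad_{(y,x_i)}\Delta_W$ in terms of $\alpha_{H_i}$. The tangent hyperplane is $T_{(y,x_i)}\CH=Dp_{v_i}(V)=Dp_{v_i}(H_i)$, and the linear maps $Dp_{v_i}$ are \emph{different} at the $n$ different points $v_i$; consequently $\bigcap_i H_i=\{0\}$ in $V$ gives no formal control on $\bigcap_i Dp_{v_i}(H_i)$ in $W\qg V$. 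Some input genuinely tied to the invariant theory of $W$ (the basic derivations, or equivalently the discriminant matrix) is needed here, and that is precisely what the cited Lemma~5.6 supplies.

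A secondary concern: you invoke Properties \hyperref[item:P1]{(P1)}--\hyperref[item:P2]{(P2)} of the map $\fact$ to produce the reduced decomposition $(r_{H_1},\dots,r_{H_n})$. In \cite{BessisKPi1} and \cite{Ripollfacto} those properties are established \emph{after}, and partly by means of, the covering property \hyperref[eq:degLL]{(P0)}, which is exactly what the present corollary is feeding into. So even if the descent step were repaired, you would need to check that this does not make the argument circular (for the limited purpose of getting a reduced decomposition whose hyperplanes intersect trivially, a direct argument avoiding $\fact$ is possible, but it is not the one you give).
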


\begin{proof}
  Set $\LL(y)=\{x_1,\dots, x_n\}$. As the $x_i$'s are distinct, from Lemma
  \ref{propgeneralposition} one has to study the hyperplanes tangent to
  $\CH$ at $(y,x_1), \dots , (y,x_n)$. By using their characterization in
  terms of basic derivations of $W$, it is straightforward to show that the
  $n$ hyperplanes are always in general position: we refer to the proof of
  \cite[Lemma 5.6]{BessisKPi1}.
\end{proof} 

In the following we will prove the equality $Z(J_{\LL})=\CK$, \ie, that $\LL$ is
étale exactly on~${Y-\CK}$.

\subsection{\texorpdfstring{The well-ramified property for $\LL$}{The well-ramified property for LL}}
\label{subsec:LLwell}
~\medskip

Following Remark \ref{rk:algLL}, consider $\LL$ as the algebraic
morphism
\[\begin{array}{ccc}
  \BC^{n-1} & \to & \BC^{n-1}\\
  (f_1,\dots, f_{n-1}) & \mapsto & (a_2(f_1,\dots, f_{n-1})\ ,\
  \dots\ ,\  a_n(f_1, \dots, f_{n-1}))\ .
\end{array} \]

According to \cite[Thm.\ 5.3]{BessisKPi1}, this is a \emph{finite}
quasihomogeneous map (for the weights $\deg(f_i)=d_i$, $\deg
a_j=jh$). So we get a graded finite polynomial extension
\[ A=\BC[a_2,\dots,a_n] \subseteq \BC[f_1,\dots, f_{n-1}]=B.\]

Such extensions are studied in \cite{vrg}. Let us recall the
properties and definitions that we need. For such an extension
$A\subseteq B$, we denote by $\Spec_1(B)$ the set of ideals of $B$ of
height one, and $\Spram(B)$ its subset consisting of ideals which are
ramified over $A$. These ideals are principal, and we will talk about
``the set of ramified polynomials of the extension'' for a set of
representatives of generators of these ramified ideals.

In \cite[Thm.\ 1.8]{vrg} we described the factorization of the
Jacobian polynomial of the extension $J_{B/A}$ . We can apply it here
and obtain:
\begin{equation} \tag{*} \label{eq:jac} J_{\LL}=\det \left( \frac{\partial a_i}{\partial f_j}
\right)_{\substack{2 \leq i \leq n\\1\leq j \leq n-1}}\doteq \prod_{Q \in
  \Spram(B)} Q ^{e_Q -1} \ , 
\end{equation}
where $e_Q$ is the ramification index of $Q$ (and $\doteq$ designates
equality up to a scalar).

We also introduced in \cite{vrg} the notion af a well-ramified extension:

\begin{defi}\label{defwellram}
  A finite graded polynomial extension $A \subseteq B$ is
  \emph{well-ramified} if:
\[ \left( J_{B/A} \right) \cap A = \bigg( \prod_{Q \in \Spram(B)} Q ^{e_Q}
\bigg) \ \text{as an ideal of } A. \]
\end{defi}

Well-ramified extensions are generalizations of Galois extensions
(where $A$ is the algebra of invariants of $B$ under the action of a
reflection group), but keep some of their characteristics. We refer to
\cite[Sec.\ 3.2]{vrg} for details and other characterizations of this
property. The name ``well-ramified'' is chosen accordingly to one of
these characterizations, namely :
\begin{enumerate}[\hspace{0.8cm}]
\item ``For any $\fp \in \Spec_1(A)$, if there exists $\fq_0 \in
  \Spec_1(B)$ over $\fp$ which is ramified, then any other $\fq \in
  \Spec_1(B)$ over $\fp$ is also ramified.'' (\cite[Prop.\ 3.2.(iv)]{vrg})
\end{enumerate}

In the following of this subsection we prove that the extension
defined by $\LL$ is well-ramified. In Sec.\ \ref{subsec:LLvrg} we
will compare the setting of Lyashko-Looijanga extensions to that of
Galois extensions.

\bigskip

We recall from Sec.\ \ref{subsec:LL} the definition of $D_{\LL}$:
\[ D_{\LL}:= \Disc(f_n^n + a_2 f_n^{n-2} + \dots + a_n\ ;\ f_n)\ , \] so
that $\CK=\LL^{-1}(E_n - \Enreg)$ is the zero locus of $D_{\LL}$ in
$Y$. We denote by $\Lb_2$ the set of all closed strata in $\Lb$ of
codimension $2$. Note that $\Lb_2$ is also the set of conjugacy
classes of elements of $\NC(W)$ of length $2$ (cf. Prop.\ \ref{prop:parab}).

We define the following map
\[\begin{array}{lccc}
\varphi :& W \qg V \simeq Y \times \BC & \to & Y\\
& \bar{v}=(y,x) & \mapsto & y \ .
\end{array}\]

Then, using the notations and properties of Sec.\ \ref{subsec:lbl}, we
have:
\[\begin{array}{lll}
y \in \CK & \Leftrightarrow & \exists x \in \LL(y), \text{ with
  multiplicity} \geq 2 \\
& \Leftrightarrow & \exists x \in \LL(y), \text{ such that } \ell(c_{y,x})\geq 2\\
& \Leftrightarrow & \exists x \in \LL(y), \text{ such that } (y,x)\in \Gamma^0
\text{ for some stratum } \Gamma \in \Lb \text{ of codim.} \geq 2\\
 & \Leftrightarrow & \exists x \in \LL(y),\ \exists \Lambda \in \Lb_2 ,
 \text{ such that } (y,x)\in \Lambda\\
& \Leftrightarrow & \exists \Lambda \in \Lb_2 , \text{ such that } y\in
\varphi(\Lambda).
\end{array} \]

So the hypersurface $\CK$ is the union of the $\varphi(\Lambda)$, for
$\Lambda \in \Lb_2$. It can be shown that they are in fact its
irreducible components (cf. \cite[Prop.\ 7.4]{Ripollfacto}). Thus we
can write
\begin{equation} \tag{**} \label{eq:disc} D_{\LL}= \prod_{\Lambda \in
    \Lb_2} D_{\Lambda}^{r_{\Lambda}},
\end{equation}
for some $r_\Lambda \geq 1$, where the $D_{\Lambda}$ are irreducible
(homogeneous) polynomials in $B=\BC[f_1,\dots, f_{n-1}]$ such that
$\varphi(\Lambda)=\{D_{\Lambda}=0 \}$.

\begin{example}
\label{ex:A3_2}
In the example of $A_3$ in Fig.\ \ref{A3}, the two strata of $\Lb_2$
described in Ex.\ \ref{ex:A3} ---let us call them
$\Lambda_{\text{red}}$ and $\Lambda_{\text{blue}}$--- project (by
$\varphi$) onto the two irreducible components of $\CK$. The explicit
computation gives that the power $r_{\Lambda_{\text{red}}}$ of
$D_{\Lambda_{\text{red}}}$ (resp. for blue) in $D_{\LL}$ equals $2$
(resp.~$3$), which is also the common order of parabolic Coxeter
elements in the conjugacy class corresponding to the strata (indeed,
those are products of two commuting transpositions,
resp. $3$-cycles). This turns out to be a general phenomenon, as
described in the following theorem.
\end{example}

Now we give an important interpretation of the integers
$r_\Lambda$, and deduce that $\LL$ is a well-ramified extension.

\begin{theo}
  \label{thm:LL}
  Let $\LL$ be the Lyashko-Looijenga extension associated to a
  well-generated, irreducible complex reflection group, together with
  the above notations. For any $\Lambda$ in $\Lb_2$, let~$w$ be a
  (length $2$) parabolic Coxeter element of $W$ in the conjugacy class
  corresponding to $\Lambda$. Recall that $r_\Lambda$ denotes the power of
  $D_{\Lambda}$ in $D_{\LL}$. Then:

  \begin{enumerate}[(a)]
  \item The integer $r_{\Lambda}$ is the number of reduced
    decompositions of $w$ into two reflections. When~$W$
    is a $2$-reflection group\footnote{A $2$-reflection group is a
      complex reflection group generated by reflections of order $2$;
      see \cite[Thm.\ 2.2]{BessisKPi1} for an interesting property of
      these groups.},
    it is simply the order of $w$.
  \item The set of ramified polynomials of the extension $A \subseteq
    B$ is the family $\{D_\Lambda \tq \Lambda \in \Lb_2 \}$, and the
    ramification index of $D_{\Lambda}$ is $r_{\Lambda}$.
  \item The $\LL$-Jacobian satisfies: $\displaystyle{J_{\LL}\doteq
      \prod_{\Lambda\in \Lb_2 } D_{\Lambda} ^{r_{\Lambda} -1}}$.
  \item The ``$\LL$-discriminant'' $\displaystyle{D_{\LL}=
      \prod_{\Lambda\in \Lb_2 } D_{\Lambda} ^{r_{\Lambda}} }$ is a
    generator for the ideal $(J_{\LL})\cap A$.
  \item The polynomial extension associated to $\LL$ is well-ramified.
  \end{enumerate}
\end{theo}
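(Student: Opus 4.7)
The plan is to prove (a) first, and then deduce (b)--(e) as algebraic consequences of (a) together with equations~\eqref{eq:jac}, \eqref{eq:disc}, and standard facts about the polynomial extension $A \subseteq B$. The key algebraic input is that the discriminant $\delta \in A$ of the generic polynomial $T^n + a_2 T^{n-2} + \cdots + a_n$ is irreducible, so $(\delta) \subset A$ is a height-one prime. Via the inclusion $A \hookrightarrow B$ one has $\delta = D_{\LL}$, so \eqref{eq:disc} reads as the primary decomposition $\delta\, B = \prod_{\Lambda \in \Lb_2} (D_\Lambda)^{r_\Lambda}$; hence the height-one primes of $B$ above $(\delta)$ are exactly the $(D_\Lambda)$, and the ramification index of each $(D_\Lambda)$ over $(\delta)$ equals $r_\Lambda$. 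This identifies $r_\Lambda$ with an algebraic ramification index and is the bridge to local computations.

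For (a), I fix a generic point $y_0$ of the smooth open part of $V(D_\Lambda) \subset \CK$ (away from the other irreducible components) and compute the local degree of $\LL$ at $y_0$ in two ways. On the algebraic side, $\LL$ is finite and flat and $\Spec A$ is smooth at $z_0 := \LL(y_0)$, so the local degree equals the ramification index $r_\Lambda$. On the geometric side, Property~\hyperref[item:P2]{(P2)} gives $\fact(y_0) = (\rho_1, \dots, \rho_{k-1}, w, \rho_{k+1}, \dots, \rho_{n-1})$, where $w$ is a length-$2$ parabolic Coxeter element of class $\Lambda$ (sitting in the block position $k$ corresponding to the unique double point of $\LL(y_0)$) and the $\rho_i$ are reflections. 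The continuity of $\fact$ together with Property~\hyperref[item:P3]{(P3)} then implies that for a generic $z \in \Enreg$ near $z_0$, the preimages of $z$ in a small neighborhood of $y_0$ are in bijection with the factorizations of $c$ obtained from $\fact(y_0)$ by replacing $w$ by a pair $(r',r'') \in \CR^2$ satisfying $r'r'' = w$, i.e.\ with $\Red_\CR(w)$. This yields $r_\Lambda = |\Red_\CR(w)|$. The specialization to $2$-reflection groups follows by applying~\eqref{eq:degLL} to the rank-$2$ parabolic subgroup $W' \subseteq W$ of which $w$ is a Coxeter element: any such $W'$ generated by order-$2$ reflections must be dihedral, and for $I_2(m)$ the LL number $2 h_{W'}^2 / |W'| = m$ coincides with the order of $w$.

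Parts (b)--(e) are then essentially formal. Corollary~\ref{coroetale} combined with~\eqref{eq:jac} forces $\Spram(B) \subseteq \{(D_\Lambda) : \Lambda \in \Lb_2\}$, while (a) gives $r_\Lambda \geq 2$ for each $\Lambda$ (since any length-$2$ parabolic Coxeter element admits at least two reduced decompositions into two reflections, by a direct rank-$2$ check), so the reverse inclusion also holds; this is (b). Part (c) is then the rewriting of~\eqref{eq:jac} in the notation of (b). For (d), the identity $(D_{\LL})\,B = \prod (D_\Lambda)^{r_\Lambda}$, together with $D_{\LL} \in A$, shows that the extension of the ideal $(D_{\LL}) \subset A$ to $B$ equals $\prod_{Q \in \Spram(B)} Q^{e_Q}$; contracting back to $A$ via the integral closedness of $A$ in its fraction field yields $(J_{\LL}) \cap A = (D_{\LL})$. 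This last equality is precisely the well-ramified condition of Definition~\ref{defwellram}, proving (e).

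The hard part is the continuity/local Hurwitz argument in the proof of (a). Making rigorous that, as $z \to z_0$ along a generic path in $E_n$, every preimage of a nearby generic $z$ in a small neighborhood of $y_0$ actually converges to $y_0$, and that the associated factorization $\fact(y(z))$ converges to $\fact(y_0)$ in a way that preserves the $n-2$ distant reflection blocks while merging two near-coincident reflections into $w$, requires working with the construction of $\rho$ via small meridian loops in $\pi_1(\BC^n - \CH)$, and with a transverse local model of the discriminant hypersurface near a codimension-$2$ stratum that reduces the analysis to the rank-$2$ Lyashko-Looijenga setup times $\BC^{n-2}$.
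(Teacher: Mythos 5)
Your proposal follows essentially the same route as the paper: identify $r_\Lambda$ with the ramification index of $(D_\Lambda)$ over the irreducible generic discriminant $(D_{\LL})\subset A$, compute it by counting fibers of $\LL$ in a small neighbourhood of a point of $\varphi(\Lambda)$ via Properties (P2)--(P3) and a local Hurwitz/braiding argument identifying those preimages with $\Red_\CR(w)$, then get (b) from Corollary~\ref{coroetale} plus $r_\Lambda>1$, (c) from equation~\eqref{eq:jac}, and (d)--(e) from $\prod_Q Q^{e_Q}=D_{\LL}\in A$. The one step you rightly flag as delicate --- making the local merging-of-two-points argument rigorous --- is exactly what the paper handles by invoking the precise definition of $\fact$, the ``Hurwitz rule'' of \cite{Ripollfacto}, and the transitivity of the Hurwitz action on reduced decompositions of $w$, so your sketch is sound.
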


\begin{proof}
  Let us prove first that all the ramified polynomials in $B$ are
  included in ${\{D_{\Lambda} \tq \Lambda \in \Lb_2 \}}$.  The
  polynomial $D_{\LL}$ is irreducible in $A$ since, as a polynomial in
  $a_2,\dots, a_n$, it is the discriminant of a reflection group of
  type $A_{n-1}$. Therefore, for all $\Lambda$ in $\Lb_2$, the
  inclusion
  \[ (D_{\Lambda})\cap A \supseteq (D) \] is an inclusion between
  prime ideals of height one in $A$. So we have $(D_{\Lambda})\cap A
  =(D)$, and the ramification index $e_{D_\Lambda}$ is equal to
  $v_{D_\Lambda}(D)=r_\Lambda$.  According to Corollary
  \ref{coroetale}, if $J_{\LL}(y)=0$, then $\LL(y)\notin \Enreg$. So
  the variety of zeros of $J_{\LL}$ (defined by the ramified
  polynomials in $B$) is included in the preimage \[\LL^{-1}(Z(D)) =
  \bigcup_{\Lambda \in \Lb_2} Z(D_{\Lambda})\ .\] Thus, any ramified
  polynomial of the extension is necessarily one of the
  $D_{\Lambda}$'s.

  \medskip

  Let us prove the point \emph{(a)}. Let $\Lambda \in \Lb_2$, and
  $\mu$ be the composition $(2,1,\dots, 1)$ of $n$. Choose
  $\xi=(w,s_3,\dots, s_n)$ in $\Fact_\mu (c)$ such that the conjugacy
  class of $w$ (the only element of length~$2$ in $\xi$) corresponds
  to $\Lambda$. Fix $e\in E_n$, with composition type $\mu$, and such
  that the real parts of its support are distinct. There exists a
  unique $y_0$ in $Y$, such that $\LL(y_0)=e$ and $\fact(y_0)=\xi$ (by
  Property \hyperref[item:P3]{(P3)} in Sec.\
  \ref{subsec:lbl}). Moreover $y_0$ lies in $\varphi(\Lambda)$
  (Property \hyperref[item:P2]{(P2)}). Using the precise definition of
  the map $\fact$ \cite[Def.\ 4.2]{Ripollfacto}, and the ``Hurwitz
  rule'' \cite[Lemma 4.5]{Ripollfacto}, we deduce that for a
  sufficiently small connected neighbourhood $\Omega_0$ of $y_0$, if
  $y$ is in $\Omega_0 \cap (Y- \CK)$, then $\fact(y)$ is in
  \[ F_w:= \{(s_1',s_2',\dots, s_n ')\in \Red_\CR (c) \tq s_1's_2'=w
  \text{ and } s_i'=s_i \ \forall i\geq 3 \} .\] Let us fix $y$ in
  $\Omega_0 \cap (Y- \CK)$. Then, because of Property
  \hyperref[item:P3]{(P3)}, we get an injection
  \[ \fact : \LL^{-1}(\LL(y)) \cap \Omega_0 \ \inj \ F_w \ .\] But
  this map is also surjective, thanks to the covering properties of
  $\LL$ and the transitivity of the Hurwitz action on $w$.
  Indeed, we can ``braid'' $s_1'$ and $s_2'$ (by cyclically
  intertwining the two corresponding points of $\LL(y)$, while staying
  in the neighbourhood) so as to obtain any factorization of
  $w$. Thus:
  \[|\LL^{-1}(\LL(y))\cap \Omega_0| = |F_w|\ .\]

  Using the classical characterization of the ramification index (see
  \emph{e.g.} \cite[Prop.\ 2.4]{vrg}), we infer that $|F_w|$ is equal
  to the ramification index $e_{D_{\Lambda}}$, so: $ r_\Lambda =
  |F_w|$. This is also the number of reduced decompositions of $w$,
  \ie, the Lyashko-Looijenga number for the parabolic subgroups in the
  conjugacy class $\Lambda$.

  For any rank $2$ parabolic subgroup with degrees $d_1',h'$, the
  $\LL$-number is $2h'/d_1'$. In the particular case when $W$ is a
  $2$-reflection group, such a subgroup is a dihedral group, hence
  $d_1'$ equals $2$ and $r_\Lambda$ is the order $h'$ of the
  associated parabolic Coxeter element $w$.

  \medskip 

  Consequently, for all $\Lambda \in \Lb_2$,
  $e_{D_\Lambda}=r_{\Lambda}$ is strictly greater than $1$, so
  $D_{\Lambda}$ is ramified, and statement \emph{(b)} is proven. Using
  Formula (\ref{eq:jac}) above, this also directly implies \emph{(c)}.

  Moreover, we obtain:
  \[ \prod_{Q \in \Spram(B)} Q ^{e_Q} = \prod_{\Lambda \in \Lb_2}
  D_\Lambda ^{e_{D_\Lambda}} = D_{\LL} \ , \] so this polynomial lies
  in $A$.  We recognize one of the characterizations of a
  well-ramified extension (namely \cite[Prop.\ 3.2.(iii)]{vrg}), from
  which we deduce \emph{(d)} and \emph{(e)}.
\end{proof}

\subsection{A more intrinsic definition of the Lyashko-Looijenga Jacobian}
~\medskip

\label{subsec:saito}

In this subsection we give an alternate definition for the Jacobian
$J_{\LL}$, which is more intrinsic, and which allows to recover a formula
observed by K. Saito.

We will use the following elementary property. Suppose $P\in
\BC[T_1,\dots, T_{n-1},X]$ has the form:
\[ P = X^n + b_1 X^{n-1} + \dots + b_n \ , \]
with $b_1,\dots, b_n \in \BC[T_1,\dots, T_{n-1}]$. Note that we do not
require $b_1$ to be zero. Let us denote by $J(P)$ the polynomial:
\[ J(P):= \Jac \left( \left( P, \DP{P}{X}, \dots, \DP[n-1]{P}{X} \right)
  \middle/  (T_1,\dots, T_{n-1},X) \right) \ . \]

\begin{lemma}
  Let $P$ be as above. We set $Y=X+\frac{b_1}{n}$ and denote by $Q$ the
  polynomial in $\BC[T_1,\dots, T_{n-1},Y]$ such that $Q(T_1,\dots,
  T_{n-1},Y)=P(T_1,\dots, T_{n-1},X)$, so that
  $ Q = Y^n + a_2 Y^{n-2} + \dots + a_n$,  with $a_2,\dots, a_n \in
  \BC[T_1,\dots, T_{n-1}]$. 

  We define $J(P)$ as above and $J(Q)$ similarly ($Y$ replacing $X$). Then:
  \begin{enumerate}[(i)]
  \item $J(P)=J(Q)$;
  \item $J(P)$ does not depend on $X$, and $J(P)\doteq \Jac((a_2,\dots, a_n) /
    (T_1,\dots, T_{n-1}))$.
  \end{enumerate}
\end{lemma}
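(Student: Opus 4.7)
The plan is to establish (i) by a direct change-of-variables argument, then to prove (ii) by an explicit computation of the determinant defining $J(Q)$ via its block structure.

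For (i), introduce $\Phi : (T_1, \dots, T_{n-1}, X) \mapsto (T_1, \dots, T_{n-1}, X + b_1/n)$, so that $P = Q \circ \Phi$. The Jacobian matrix of $\Phi$ is lower-triangular with $1$'s on the diagonal, hence $\det d\Phi = 1$. Since $\partial Y/\partial X = 1$ at $T$ fixed, we have $\partial^k P/\partial X^k = (\partial^k Q/\partial Y^k) \circ \Phi$ for each $k$. Hence the vector-valued map $(T, X) \mapsto (P, \partial P/\partial X, \dots, \partial^{n-1} P/\partial X^{n-1})$ factors as the analogous map for $Q$ precomposed with $\Phi$, and the chain rule gives $J(P) = (J(Q) \circ \Phi) \cdot \det d\Phi = J(Q) \circ \Phi$. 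Once (ii) is proved, $J(Q)$ will be seen to be independent of $Y$, so that $J(P) = J(Q)$ as polynomials in $T_1, \dots, T_{n-1}$, and in particular $J(P)$ does not depend on $X$.

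For (ii), I work with $Q = Y^n + a_2 Y^{n-2} + \dots + a_n$. Because $Q$ has no $Y^{n-1}$ term, one finds $\partial^{n-1} Q/\partial Y^{n-1} = n!\, Y$, so the last row of the $n \times n$ Jacobian matrix $M$ defining $J(Q)$ is $(0, \dots, 0, n!)$. Expanding along this row gives $J(Q) = \pm n! \cdot \det M'$, where $M'$ is the $(n-1)\times(n-1)$ minor with rows indexed by $i \in \{0, \dots, n-2\}$, columns by $j \in \{1, \dots, n-1\}$, and entries $M'_{i,j} = \partial(\partial^i Q/\partial Y^i)/\partial T_j$.

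A direct expansion of $\partial^i Q/\partial Y^i$ in powers of $Y$ yields the factorization $M' = C \cdot D$, where $D_{k,j} = \partial a_k/\partial T_j$ (so that $\det D = \Jac((a_2, \dots, a_n)/(T_1, \dots, T_{n-1}))$) and $C$ is the matrix with $C_{i,k} = \frac{(n-k)!}{(n-k-i)!}\, Y^{n-k-i}$ for $i + k \le n$, and $0$ otherwise. The matrix $C$ is anti-triangular, with anti-diagonal entries $C_{i,n-i} = i!$ and all $Y$-dependence lying \emph{above} the anti-diagonal; hence $\det C$ equals the nonzero scalar $\pm \prod_{i=0}^{n-2} i!$, independent of $Y$. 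Combining these pieces yields $J(Q) \doteq \Jac((a_2, \dots, a_n)/(T_1, \dots, T_{n-1}))$, proving (ii), and then (i) follows from the chain-rule identity above. The only conceptual step is to spot the factorization $M' = C D$ and recognize the anti-triangular structure of $C$ with constant anti-diagonal; everything else is bookkeeping.
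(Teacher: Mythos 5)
Your proof is correct and complete. The paper itself gives no proof of this lemma (it only says the proof is elementary and defers to Lemma~3.4 of the author's thesis \cite{These}), so there is no in-paper argument to compare against; your route --- the substitution $\Phi$ giving $J(P)=J(Q)\circ\Phi$ by the chain rule, the observation that $\partial^{n-1}Q/\partial Y^{n-1}=n!\,Y$ reduces $J(Q)$ to the minor $M'$, and the factorization $M'=CD$ with $C$ anti-triangular of constant determinant $\pm\prod_{i=0}^{n-2}i!$ --- is exactly the kind of elementary verification being alluded to, and every step checks out (in particular, the vanishing of $C_{i,k}$ for $i+k>n$ does force the anti-diagonal permutation to be the only contributor to $\det C$).
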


The proof is elementary, and can be found in \cite[Lemma
3.4]{These}. Consequently, we have an intrinsic definition for the
Lyashko-Looijenga Jacobian:
\[ J_{\LL} \doteq J(\Delta_W)=\Jac\left( \left( \Delta_W,
    \DP{\Delta_W}{f_n}, \dots, \DP[n-1]{\Delta_W}{f_n} \right)
  \middle/ (f_1,\dots, f_{n}) \right) \ . \] where $f_1,\dots, f_n$ do
not need to be chosen such that the coefficient of $f_n^{n-1}$ in
$\Delta_W$ is zero. Note that for the computation of $D_{\LL}$ as well,
the fact that the coefficient $a_1$ is zero in $\Delta_W$ is not
important, because of invariance by translation.

\medskip

With these alternative definitions, the factorization of the Jacobian
given by Thm.\ \ref{thm:LL} has already been observed (for real
groups) by Kyoji Saito: it is Formula 2.2.3 in
\cite{saitopolyhedra}. He uses this formula in his study of the
semi-algebraic geometry of the quotient $W \qg V$.
His proof was case-by-case and detailed in an unpublished extended
version of the paper \cite{saitopolyhedra} (\cite[Lemma
3.5]{saitopolyhedra2}).

\subsection{The Lyashko-Looijenga extension as a virtual reflection group}
\label{subsec:LLvrg}
~\medskip

\begin{table}[!h]
{\renewcommand{\arraystretch}{1.8}
\begin{tabular}{@{\vrule width 1pt\,}m{2.4cm}@{\,\vrule width 1pt\,}>{\centering}m{6.2cm}@{\,\vrule width 1pt\,}>{\centering}m{6.7cm}@{\,\vrule width 1pt}|}
  \hlinewd{1pt}

  &Complex reflection group & Lyashko-Looijenga extension
  \tabularnewline \hlinewd{1pt}
  Morphism: & 
$\begin{array}{rcl}
    p :\quad  V & \to & W \qg V \\
    (v_1,\dots, v_n)  & \mapsto & (f_1(v),\dots, f_n(v))
  \end{array}$
  &
$\begin{array}{rcl}
    \LL : \quad Y & \to &\BC^{n-1} \\
     (y_1,\dots,y_{n-1}) & \mapsto & (a_2(y),\dots, a_n(y))
   \end{array}$
   \tabularnewline\hline
   Weights: & $\deg v_j=1$ ; $\deg f_i=d_i$ & $\deg y_j=d_j$ ; $\deg a_i=ih$\tabularnewline\hline
   Extension: & $\BC[f_1,\dots, f_n]=\BC[V]^W \subseteq \BC[V]$ & 
   $\BC[a_2,\dots, a_n] \subseteq \BC[y_1, \dots, y_{n-1}]$ \tabularnewline\hline
   Free, of rank: & $|W|=d_1\dots d_n$\ ;\  Galois&
   $n!h^n / |W|=\prod ih / \prod d_j$; non-Galois\tabularnewline\hline
  
  Unramified covering: & $\Vreg \ \surj \ W \qg \Vreg$ & $Y-\CK \ \surj \ \Enreg$\tabularnewline\hline
  Generic fiber: & $\simeq W$ & $\simeq  \Red_\CR(c)$\tabularnewline\hline
  Ramified part: & $\bigcup_{H \in \CA} H \ \surj \ (\bigcup H) / W=\CH$ & $\CK=
  \bigcup_{\Lambda \in \Lb_2} \varphi(\Lambda) \ \surj \ E_{\alpha}$
  \tabularnewline\hline
  Discriminant: & $\Delta_W= \prod_{H\in \CA} \alpha_H^{e_H}\in \BC[f_1,\dots, f_n]$ &
  $D_{\LL}=\prod_{\Lambda \in \Lb_2} D_{\Lambda}^{r_{\Lambda}} \in
  \BC[a_2,\dots, a_n]$\tabularnewline\hline
  Ramification indices: & $e_H=|W_H|$ & $r_{\Lambda}=$ order of parabolic elements of type
  $\Lambda$ \tabularnewline\hline
  Jacobian: & $J_W= \prod \alpha_H^{e_H-1}\in
    \BC[V]$& $J_{\LL}=\prod
    D_{\Lambda}^{r_{\Lambda}-1} \in \BC[f_1,\dots,
    f_{n-1}]$\tabularnewline   \hlinewd{1pt}
\end{tabular}}
\caption{Analogies between Galois extensions and Lyashko-Looijenga extensions.}
\label{analogies}
\end{table}

In \cite{vrg} we discussed some properties of well-ramified
extensions, and explained that they can be regarded as an analogous of
the invariant theory of reflection groups. Indeed, considering a finite
graded polynomial extension $A\subseteq B$, if the polynomial algebra
$A$ is the invariant algebra $B^W$ of $B$ under a group action, then
$W$ is a complex reflection group (by Chevalley-Shephard-Todd's
theorem). Here, for $\LL$ extensions, the situation is similar, but
$A$ is \emph{not} the invariant ring of $B$ under some group
action. Still, many properties remain valid. Following Bessis, we use
the term \emph{virtual reflection group} for this kind of
extensions. The general situation is discussed in \cite{vrg}.

In Table \ref{analogies} we list the first analogies between the
setting of a Galois extension (polynomial extension with a reflection
group acting) and that of a Lyashko-Looijenga extension regarded as a
virtual reflection group. This is not an exhaustive list, and we may
wonder if the analogies can be made further.

\section{Combinatorics of the submaximal factorizations}
\label{sec:submax}

In this section we are going to use properties of the morphism $\LL$
to count specific factorizations of a Coxeter element; this will lead,
thanks to Thm.\ \ref{thm:LL}, to a geometric proof of a particular
instantiation of Chapoton's formula.

\medskip

We call \emph{submaximal factorization} of a Coxeter element $c$ a
block factorization of $c$ with $n-1$ factors, according to Def.\
\ref{def:facto}. Thus, submaximal factorizations contains $(n-2)$
reflections and one factor of length $2$, and are a natural first
generalization of the set of reduced decompositions~$\Red_\CR
(c)$. These are included in the more general ``primitive''
factorizations studied in \cite{Ripollfacto}.

\bigskip

\subsection{Submaximal factorizations of type $\Lambda$}
\label{subsec:submaxtype}
~\medskip

Let $\Lambda$ be a stratum of $\Lb_2$: it corresponds
(cf. Prop.\ \ref{prop:parab}), to a conjugacy class of parabolic Coxeter
elements of length $2$. We say that a submaximal factorization is
\emph{of type $\Lambda$} if its factor of length $2$ lies in this
conjugacy class. We denote by $\Fact_{n-1}^{\Lambda}(c)$ the set of
such factorizations. Using the relations between $\LL$ and $\fact$, we
can count these factorizations.

\medskip

For $\Lambda$ a stratum of $\Lb_2$, let us define the following restriction
of $\LL$:
\[ \LL_{\Lambda} \ : \ \varphi(\Lambda) \to E_\alpha \ ,\] where
$E_\alpha= E_n - \Enreg$. We denote by $E_\alpha ^0$ the
subset of $E_\alpha$ constituted by the configurations whose partition
(of multiplicities) is exactly $\alpha=2^1 1^{n-2}$.

We define $\varphi(\Lambda)^0=\LL_{\Lambda}^{-1}(E_\alpha ^0)$, and
$\CK^0=\LL^{-1}(E_\alpha ^0)=\cup_{\Lambda \in \Lb_2}
\varphi(\Lambda)^0$. We recall from \cite{Ripollfacto} the
following properties:
\begin{itemize}
\item the restriction of $\LL$ : $\CK^0 \ \surj\ E_\alpha ^0$ is a
  (possibly not connected) unramified covering \cite[Thm.\ 5.2]{Ripollfacto};
\item the connected components of $\CK^0$ are the $\varphi(\Lambda)^0$, for
  $\Lambda \in \Lb_2$;
\item the image, by the map $\fact$, of $\varphi(\Lambda)^0$ is exactly
  $\Fact_{n-1}^{\Lambda}(c)$;
\end{itemize}

\medskip

The map $\LL_\Lambda$ defined above is an algebraic morphism, corresponding
to the extension
\[ \BC[a_2,\dots, a_n] / (D) \subseteq \BC[f_1,\dots, f_{n-1}]/(D_\Lambda)
\ .\]

\begin{theo}
  \label{thm:fact}
  Let $\Lambda$ be a stratum of $\Lb_2$. Then:
  \begin{enumerate}[(a)]
  \item $\LL_\Lambda$ is a finite quasi-homogeneous morphism of degree
    $\frac{(n-2)!\ h^{n-1}}{|W|} \deg D_\Lambda $;
  \item the number of submaximal factorizations of $c$ of type $\Lambda$ is
    equal to
    \[ |\Fact_{n-1}^{\Lambda}(c)| = \frac{(n-1)!\ h^{n-1}}{|W|} \deg
    D_\Lambda \ .\]
  \end{enumerate}
  \end{theo}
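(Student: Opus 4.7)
The plan is to prove (a) algebraically by computing the generic graded rank of the finite ring extension cut out by $D_\Lambda$, and then to deduce (b) from (a) by combining Property \hyperref[item:P3]{(P3)} with a symmetry argument on the $n-1$ possible positions of the length-$2$ factor in a submaximal factorization.

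For (a), I first note that $\LL_\Lambda$ is finite (as the restriction of the finite morphism $\LL$ to the closed subvariety $\varphi(\Lambda) = \{D_\Lambda = 0\}$) and quasi-homogeneous (since $\LL$ is quasi-homogeneous and both $D_\Lambda$ and $D_\LL$ are weighted homogeneous). To compute its degree, I pass to the induced graded ring extension
\[ A' := \BC[a_2,\dots,a_n]/(D_\LL) \ \inj\ B' := \BC[f_1,\dots,f_{n-1}]/(D_\Lambda), \]
which is well-defined because, by (\ref{eq:disc}), $D_\Lambda$ divides the image of $D_\LL$ in $B$. Both $A'$ and $B'$ are graded Cohen--Macaulay integral domains of Krull dimension $n-2$ (quotients of regular graded rings by a single nonzerodivisor), so $B'$ is a torsion-free graded $A'$-module, and $\deg \LL_\Lambda$ equals its generic rank, which is computed by the Hilbert-series limit $\lim_{t \to 1} \mathrm{HS}_{B'}(t)/\mathrm{HS}_{A'}(t)$. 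Plugging in
\[ \mathrm{HS}_{A'}(t) = (1 - t^{n(n-1)h})\prod_{i=2}^n (1-t^{ih})^{-1}, \quad \mathrm{HS}_{B'}(t) = (1 - t^{\deg D_\Lambda})\prod_{j=1}^{n-1} (1-t^{d_j})^{-1}, \]
and using $\prod_{i=2}^n ih = n!\,h^{n-1}$ together with $\prod_{j=1}^{n-1} d_j = |W|/h$, one obtains exactly the announced $\deg \LL_\Lambda = \tfrac{(n-2)!\,h^{n-1}}{|W|} \deg D_\Lambda$.

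For (b), I use that the restriction $\LL_\Lambda : \varphi(\Lambda)^0 \surj E_\alpha^0$ is an unramified covering (recalled just before the statement), whose fiber over any point of $E_\alpha^0$ therefore has cardinality $\deg \LL_\Lambda$. For each $i \in \{1,\dots,n-1\}$, I pick a point $e_i \in E_\alpha^0$ whose support, written in lexicographic order, has its double point at index $i$, so that its associated composition is $\mu_i = (1^{i-1},2,1^{n-i-1})$. Property \hyperref[item:P3]{(P3)} then identifies $\LL^{-1}(e_i)$ with the set of factorizations of $c$ of composition $\mu_i$, and Property \hyperref[item:P2]{(P2)} tells us that the subset of this fiber lying in $\varphi(\Lambda)^0$ corresponds exactly to those factorizations whose length-$2$ factor belongs to the conjugacy class $\Lambda$. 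Hence, for each $i$, the number of factorizations of composition $\mu_i$ whose length-$2$ factor lies in $\Lambda$ equals $\deg \LL_\Lambda$; summing over the $n-1$ possible positions of the length-$2$ factor yields $|\Fact_{n-1}^{\Lambda}(c)| = (n-1) \deg \LL_\Lambda$, and combining with (a) concludes.

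The most delicate point, I expect, is the justification underlying (a): one has to check carefully that the finite graded extension $A' \inj B'$ is torsion-free (which follows from $D_\Lambda$ being irreducible so that $B'$ is a domain) and, more importantly, that the Hilbert-series limit genuinely computes the degree of $\LL_\Lambda$ as a finite morphism of irreducible (singular) affine varieties of dimension $n-2$. Once this is granted, the rest is weighted-degree bookkeeping plus a direct appeal to the already-established properties \hyperref[item:P2]{(P2)} and \hyperref[item:P3]{(P3)}.
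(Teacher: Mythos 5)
Your proposal is correct and follows essentially the same route as the paper: part (a) is the same weighted Hilbert-series/graded-rank computation for the extension $\BC[a_2,\dots,a_n]/(D_{\LL})\subseteq \BC[f_1,\dots,f_{n-1}]/(D_\Lambda)$ (the paper states it more tersely as a "finite free extension of degree $\frac{n!h^n}{|W|}\frac{\deg D_\Lambda}{\deg D}$", while you spell out the torsion-freeness and the limit as $t\to 1$), and part (b) is the identical argument via Properties \hyperref[item:P2]{(P2)}--\hyperref[item:P3]{(P3)} and the $n-1$ possible positions of the length-$2$ factor. The only small point worth adding is that injectivity of your map $A'\inj B'$ rests on $(D_\Lambda)\cap A=(D_{\LL})$, which was established in the proof of Thm.\ \ref{thm:LL}.
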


\begin{proof}
  From Hilbert series, we get that $\LL_\Lambda$ is a finite free extension
  of degree
    \[ \left. \frac{\prod \deg (a_i)}{\deg(D)}\  \middle/ \ \frac{\prod
    \deg(f_i)}{\deg(D_\Lambda)} \right. = \frac{n!\ h^n}{|W|}\frac{\deg
    D_\Lambda}{\deg D} .\]
  \emph{(a)} As $D_{\LL}$ is a discriminant of type $A$ for the variables $a_2,\dots, a_n$ of
    weights $2h,\dots, nh$, we have $\deg D_{\LL}= n(n-1)h$. Thus:
    \[ \deg(\LL_\Lambda) = \frac{(n-2)!\ h^{n-1}}{|W|} \deg D_\Lambda
    .\] \emph{(b)} This degree is also the cardinality of a generic
    fiber of $\LL_\Lambda$, \ie, $|\LL^{-1}(\eps) \ \cap \
    \varphi(\Lambda)|$, for~${\eps \in E_\alpha^0}$. Consequently, from
    Property \hyperref[item:P3]{(P3)} in Sec.\ \ref{subsec:lbl}, it
    counts the number of submaximal factorizations of type $\Lambda$,
    where the length $2$ element has a \emph{fixed} position (given by
    the composition of $n$ associated to $\eps$). There are $(n-1)$
    compositions of partition type $\alpha\vdash n$, so we obtain
    $|\Fact_{n-1}^{\Lambda}(c)|=(n-1)\deg (\LL_\Lambda) =
    \frac{(n-1)!\ h^{n-1}}{|W|} \deg D_\Lambda$.
  \end{proof}

\begin{remark}
\label{rkconcat}
Let us denote by $\Fact_{(2,1,\dots, 1)}^{\Lambda}(c)$ the set of submaximal
factorizations of type~$\Lambda$ where the length $2$ factor is in first
position. By symmetry, formula (b) is equivalent to
\[ |\Fact_{(2,1,\dots, 1)}^{\Lambda}(c)|=\frac{(n-2)!\ h^{n-1}}{|W|} \deg
D_\Lambda \ .\] As $\sum r_{\Lambda} \deg D_{\Lambda}= \deg D_{\LL}=n(n-1)h$,
this implies the equality :
\[ \sum_{\Lambda \in \Lb_2} r_{\Lambda} |\Fact_{(2,1,\dots,
  1)}^{\Lambda}(c)|= \frac{(n-2)!\ h^{n-1}}{|W|} \deg D_{\LL}=
\frac{n!h^n}{|W|}=|\Red_\CR(c)| \ .\] This formula reflects a property
of the following concatenation map:
\[ \begin{array}{ccc}
\Red_\CR (c) & \surj & \Fact_{(2,1,\dots, 1)}(c)\\
(s_1,s_2,\dots, s_n) & \mapsto & (s_1 s_2, s_3,\dots, s_n) \ ,
\end{array}
\]
namely, that the fiber of a factorization of type $\Lambda$ has
cardinality $r_{\Lambda}$ (which is the number of factorizations of
the first factor in two reflections).
\end{remark}

\begin{remark}
\label{rkkra}
  In \cite{KraMu}, motivated by the enumerative theory of the generalized
  noncrossing partitions, Krattenthaler and Müller defined and computed
  the \emph{decomposition numbers} of a Coxeter element, for all
  irreducible \emph{real} reflection groups. In our terminology, these are
  the numbers of block factorizations according to the Coxeter type of the
  factors. Note that the Coxeter type of a parabolic Coxeter element is the
  type of its associated parabolic subgroup, in the sense of the
  classification of finite Coxeter groups. So the conjugacy class for a
  parabolic elements is a finer characteristic than the Coxeter type: take
  for example $D_4$, where there are three conjugacy classes of parabolic
  elements of type $A_1\times A_1$.

  Nevertheless, when $W$ is real, most of the results obtained from formula
  (b) in Thm.\ \ref{thm:fact} are very specific cases of the computations in
  \cite{KraMu}. But the method of proof is completely different, geometric
  instead of combinatorial\footnote{The computation of all decomposition
    numbers for complex groups, by combinatorial means, is also a work in
    progress (Krattenthaler, personal communication).}. Note that another possible
  way to tackle this problem is to use a recursion, to obtain data for the
  group from the data for its parabolic subgroups. A recursion formula (for
  factorizations where the rank of each factor is dictated) is indeed given
  by Reading in \cite{reading}, but the proof is very specific to the real
  case.
\end{remark}

For $W$ non-real, formula (b) implies new combinatorial results on the
factorization of a Coxeter element. The numerical data for all
irreducible well-generated complex reflection groups are listed in
Section \ref{sec:data}. In particular, we obtain (geometrically)
general formulas for the submaximal factorizations of a given type in
$G(e,e,n)$.

\subsection{Enumeration of submaximal factorizations of a Coxeter element}
\label{subsec:submax}
~\medskip

Thanks to Thms~\ref{thm:LL} and \ref{thm:fact}, we can now obtain a
formula for the number of submaximal factorizations, with a
geometric proof:

\begin{coro}
\label{cor:submax}
  Let $W$ be an irreducible well-generated complex reflection group, with
  invariant degrees $d_1\leq\dots \leq d_n=h$. Then, the number of
  submaximal factorizations of a Coxeter element $c$ is equal to:
    \[ |\Fact_{n-1}(c)| = \frac{(n-1)!\ h^{n-1}}{|W|}
\left( \frac{(n-1)(n-2)}{2}h + \sum_{i=1}^{n-1} d_i \right) .\] 
\end{coro}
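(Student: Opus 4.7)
The plan is to combine Theorem~\ref{thm:fact}(b) with the factorizations of $J_{\LL}$ and $D_{\LL}$ provided by Theorem~\ref{thm:LL}, so that only a degree computation remains.

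First I would split the set of submaximal factorizations according to the conjugacy class of the length-$2$ factor, writing
\[ |\Fact_{n-1}(c)| \;=\; \sum_{\Lambda\in\Lb_2} |\Fact_{n-1}^{\Lambda}(c)| \;=\; \frac{(n-1)!\,h^{n-1}}{|W|}\sum_{\Lambda\in\Lb_2}\deg D_\Lambda , \]
using Theorem~\ref{thm:fact}(b) termwise. Thus the statement reduces to establishing
\[ \sum_{\Lambda\in\Lb_2}\deg D_\Lambda \;=\; \tfrac{(n-1)(n-2)}{2}\,h \,+\, \sum_{i=1}^{n-1} d_i . \]

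To evaluate this sum I would exploit the two factorizations from Theorem~\ref{thm:LL}, namely $D_{\LL}\doteq\prod_\Lambda D_\Lambda^{r_\Lambda}$ and $J_{\LL}\doteq\prod_\Lambda D_\Lambda^{r_\Lambda-1}$. Comparing total degrees gives
\[ \sum_{\Lambda\in\Lb_2}\deg D_\Lambda \;=\; \deg D_{\LL} - \deg J_{\LL}, \]
so everything boils down to computing the two weighted degrees on the right.

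For $\deg D_{\LL}$, recall that $D_{\LL}$ is the discriminant (in $f_n$) of the monic degree-$n$ polynomial $f_n^n + a_2 f_n^{n-2}+\dots+a_n$, hence is a universal polynomial expression in $a_2,\dots,a_n$ of bi-degree $n(n-1)$ when each $a_i$ is counted with weight $1$; giving $a_i$ its actual weight $ih$ (as a polynomial in $f_1,\dots,f_{n-1}$) yields $\deg D_{\LL}=n(n-1)h$. For $\deg J_{\LL}$, I would use that $J_{\LL}=\det(\partial a_i/\partial f_j)_{2\le i\le n,\,1\le j\le n-1}$ is weighted-homogeneous with each $\partial a_i/\partial f_j$ of degree $ih-d_j$; since every term in the determinantal expansion picks each row and each column once,
\[ \deg J_{\LL} \;=\; \sum_{i=2}^{n} ih \,-\, \sum_{j=1}^{n-1} d_j \;=\; \tfrac{(n-1)(n+2)}{2}\,h \,-\, \sum_{j=1}^{n-1} d_j . \]
Subtracting and simplifying yields the desired value of $\sum_\Lambda\deg D_\Lambda$, which when inserted in the first display produces the formula in the corollary.

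There is no real obstacle here: Theorem~\ref{thm:LL} already contains the heavy case-dependent content via Property~\hyperref[item:P3]{(P3)}, so the remaining work is a clean weighted-degree bookkeeping. The only point requiring mild care is the identification $\deg D_{\LL}=n(n-1)h$, which should be justified by observing that the universal discriminant of a degree-$n$ monic polynomial has degree $n(n-1)$ in the standard $A_{n-1}$ weighting and then rescaling by $h$.
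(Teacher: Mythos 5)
Your proposal is correct and follows essentially the same route as the paper: decompose $|\Fact_{n-1}(c)|$ over the strata $\Lambda\in\Lb_2$ via Theorem~\ref{thm:fact}(b), identify $\sum_\Lambda \deg D_\Lambda$ with $\deg D_{\LL}-\deg J_{\LL}$ using the factorizations in Theorem~\ref{thm:LL}, and finish with the weighted-degree computations $\deg D_{\LL}=n(n-1)h$ and $\deg J_{\LL}=\sum_{i=2}^n ih-\sum_{j=1}^{n-1}d_j$. The arithmetic checks out and no step is missing.
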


\begin{proof}
  Using Thm.\ \ref{thm:fact}(b) and Thm.\ \ref{thm:LL}(b)-(c), we compute:
  \[\begin{array}{lcl}
    |\Fact_{n-1}(c)| =|\Fact_{\alpha}(c)| & = &
    \displaystyle{\sum_{\Lambda \in \Lb_2}
      |\Fact_{n-1}^{\Lambda}(c)|}\\ & = &
    \displaystyle{\frac{(n-1)!\ h^{n-1}}{|W|} \sum_{\Lambda \in \Lb_2}
      \deg D_\Lambda }\\ & = &
    \displaystyle{\frac{(n-1)!\ h^{n-1}}{|W|} \left( \deg D_{\LL} - \deg
        J_{\LL} \right) }\ ,
  \end{array} \]
  As $\deg D_{\LL} = n(n-1)h$ and $\deg
  J_{\LL} = \sum_{i=2}^n \deg(a_i) - \sum_{j=1}^{n-1}
  \deg(f_j)=\sum_{i=2}^n ih - \sum_{j=1}^{n-1} d_j$, a quick
  computation gives the conclusion.
\end{proof}

\begin{remark}
  The formula in the above theorem is actually included in Chapoton's
  formula: indeed, there exist easy combinatorial tricks allowing to
  pass from the numbers of multichains to the numbers of strict chains
  (which are roughly the numbers of block factorizations). We refer to
  \cite[App.\ B]{These} for details of these relations and general
  formulas for the number of block factorizations predicted by
  Chapoton's formula.

  \emph{However}, the proof we obtained here is more satisfactory (and
  more enlightening) than the one using Chapoton's formula. Indeed, if
  we sum up the ingredients of the proof, we only made use of
  the formula for the Lyashko-Looijenga number $n!\ h^n/\ |W|$ ---
  necessary to prove the first properties of $\LL$ in
  \cite{BessisKPi1} ---, the remaining being the geometric properties
  of $\LL$, for which we never used the classification. In other
  words, we travelled from the numerology of $\Red_\CR(c)$ to that of
  $\Fact_{n-1}(c)$, without adding any case-by-case analysis to the
  setting of \cite{BessisKPi1}.
\end{remark}

 Although it seems to be a new interesting avenue towards a geometric
 explanation of Chapoton's formula, the method used here to compute
 the number of submaximal factorizations is not directly generalizable
 to factorizations with fewer blocks. A more promising approach would
 be to avoid computing explicitely these factorizations, and to try to
 understand globally Chapoton's formula as some ramification formula
 for the morphism $\LL$. A reformulation of the formula gives indeed:
 \[ \forall p \in \BN,\ \sum_{k = 1}^n \binom{p+1}{k} |\Fact_k(c)| =
 \prod_{i=1}^n \frac{d_i+ph}{d_i}\ ,\] where the $\Fact_k$ are closely
 related to the cardinalities of the fibers of $\LL$.

\section{Numerical data for the factorizations of the
  Lyashko-Looijenga discriminants}

\label{sec:data}
\begin{table}[!h]
\rotatebox{90}{
{\small
$   
{\renewcommand{\arraystretch}{1.3}
\begin{disarray}{@{\vrule width 1pt\,}c@{\,\vrule width 1pt\,}c@{\,\vrule width 1pt\,}c@{\,\vrule width 1pt}}
  \hlinewd{1pt}
\text{\begin{tabular}{c} Group type \\ {[Isodiscriminantal groups]}
    \end{tabular}} &
  \slfrac{(n-2)!\ h^{n-1}}{|W|} &
  \LL\text{-data} 
  \\
  \hlinewd{1pt} 
  \begin{array}{c} A_n \ ,\ n \geq 2. \\
    \left[G_4,G_8,G_{16},G_{25},G_{32}\right] \end{array} &
  \slfrac{(n+1)^{n-2}}{(n(n-1))} & \boxed{2} \centerdot \left(\slfrac{n(n-1)(n-2)}{2}\right) +
  \boxed{3} \centerdot \left(n(n-1)\right) \\
  \hline
  \begin{array}{c} B_n \ , \ {n\geq 2}. \\ \left[G(d,1,n),G_5,G_{10},G_{18},G_{26}\right]
    \end{array} &
  \slfrac{n^{n-2}}{(2(n-1))} & 
  \begin{array}{r}
    \boxed{2} \centerdot \left((n-1)(n-2)(n-3)\right) + \boxed{2}
    \centerdot \left(2(n-1)(n-2)\right)  \qquad \qquad \\
    +\: \boxed{3} \centerdot
    \left(2(n-1)(n-2)\right)+ \boxed{4}
    \centerdot \left(2(n-1)\right)
  \end{array}
  \\
  \hline
  \begin{array}{c} 
    I_2(e)\\
    \left[G_6,G_9,G_{17},G_{14},G_{20},G_{21}\right] 
  \end{array}  &
  \slfrac12 & 
  \boxed{e}\centerdot \left(2\right) 
  \\
  \hline
  \begin{array}{c}
    G(e,e,n), \ e \geq 2, n\geq 5 \\
    (=D_n \text{ for } e=2)
  \end{array} &
  \slfrac{(n-1)^{n-2}}n & 
  \boxed{2} \centerdot \left(\slfrac{n(n-2)(n-3)e}{2}\right) +
  \boxed{3} \centerdot \left(n(n-2)e \right) + \boxed{e} \centerdot \left(n\right) 
  \\
  \hline
  G(e,e,3)\ ,\ e\geq 3&
  \slfrac23 & 
  \begin{array}{lcccc}
    \text{If }3\nmid e\ &: &  \boxed{3} \centerdot \left(3e\right) &+& \boxed{e}\centerdot \left(3\right)\\
    \text{If }3\mid e\ &: &  \boxed{3} \centerdot \left(e\right)+ \boxed{3} \centerdot \left(e\right)+
    \boxed{3} \centerdot \left(e\right) &+&\boxed{e}\centerdot \left(3\right) 
  \end{array}
  \\
  \hline
  \begin{array}{c}
    G(e,e,4), \ e \geq 2 \\
    (=D_4 \text{ for } e=2)
  \end{array} &
  \slfrac{9}4 & 
  \begin{array}{lcccc}
    \text{If }e\text{ odd } &: & \boxed{2}\centerdot \left(4e\right) &+& \boxed{3}
    \centerdot \left(8e\right) + \boxed{e} \centerdot \left(4\right)   \\
    \text{If }e\text{ even }& : & \boxed{2}\centerdot \left(2e\right) + \boxed{2}\centerdot \left(2e\right) &+ &\boxed{3}
    \centerdot \left(8e\right) + \boxed{e} \centerdot \left(4\right) 
  \end{array}
  \\
  \hline G_{23} \ (=H_3)& \slfrac56 & \boxed{2} \centerdot \left(6\right) +
  \boxed{3} \centerdot \left( 6 \right) + \boxed{5} \centerdot \left( 6
  \right)
  \\
  \hline G_{24} & \slfrac7{12} & \boxed{3} \centerdot \left(12 \right) +
  \boxed{4} \centerdot \left(12 \right)
  \\
  \hline G_{27} & \slfrac5{12} & \boxed{3} \centerdot \left(12 \right) +
  \boxed{3} \centerdot \left(12 \right) + \boxed{4} \centerdot \left(12
  \right) + \boxed{5} \centerdot \left(12 \right)
  \\
  \hline G_{28} \ (=F_4) & 3 &  \boxed{2} \centerdot \left( 24\right) +
  \boxed{3} \centerdot \left( 8 \right) + \boxed{3} \centerdot \left( 8
  \right) +  \boxed{4} \centerdot \left( 12 \right)
  \\
  \hline G_{29} & \slfrac{25}{12} & \boxed{2} \centerdot \left(24 \right) +
  \boxed{3} \centerdot \left(48 \right) + \boxed{4} \centerdot \left(12
  \right)
  \\
  \hline G_{30} \ (=H_4) & \slfrac{15}4 & \boxed{2} \centerdot
  \left(60\right) + \boxed{3} \centerdot \left( 40 \right) + \boxed{5}
  \centerdot \left( 24 \right)
  \\
  \hline G_{33} & \slfrac{243}{20} & \boxed{2} \centerdot \left(60 \right)
  + \boxed{3} \centerdot \left(80 \right)
  \\
  \hline G_{34} & \slfrac{2401}{30} & \boxed{2} \centerdot \left(270
  \right) + \boxed{3} \centerdot \left(240 \right)
  \\
  \hline G_{35} \ (=E_6) & \slfrac{576}{5} & \boxed{2} \centerdot \left(90
  \right) + \boxed{3} \centerdot \left(60 \right)
  \\
  \hline G_{36} \ (=E_7) & \slfrac{19683}{14} & \boxed{2} \centerdot
  \left(210 \right) + \boxed{3} \centerdot \left(112 \right)
  \\
  \hline G_{37} \ (=E_8) & \slfrac{1265625}{56} & \boxed{2} \centerdot
  \left(504 \right) + \boxed{3} \centerdot \left(224 \right) \\
  \hlinewd{1pt}
\end{disarray}}
$  
}
}
\caption{Factorization of the $\LL$-discriminant for irreducible
  well-generated groups}\label{tab}
\end{table}

Here we detail explicit numerical data regarding the factorization of
the discriminant polynomial $D_{\LL}$.

Let us write (as in (\ref{eq:disc}) in Sec.\ \ref{subsec:LLwell})
\[ D_{\LL} = \prod_{i=1}^r D_i^{p_i} \] for the factorization of
$D_{\LL}$ into irreducible polynomials of $\BC[f_1,\dots, f_{n-1}]$.

\medskip

In Table \ref{tab}, we give, for each irreducible well-generated
group, the weighted degrees $\deg(D_i)$ and the powers $p_i$ which
appear in the factorization above. It is enough to deal with the
$2$-reflection groups, because any irreducible complex reflection
group is isodiscriminantal to a $2$-reflection group (see
\cite[Thm.2.2]{BessisKPi1}): it has the same discriminant $\Delta$,
and consequently the same braid group and the same polynomial
$D_{\LL}$. Thus, we only have to treat the four infinite series $A_n$,
$B_n$, $I_2(e)$, $G(e,e,n)$ (containing $D_n$), and $11$ exceptional
types (including the $6$ exceptional Coxeter groups).

\subsection*{Notations} 

In the last column of Table \ref{tab}, the ``$\LL$-data'':
\[ \boxed{p_1} \centerdot \left( u_1\right) + \boxed{p_2} \centerdot
\left( u_2\right) + \dots + \boxed{p_r} \centerdot \left(
  u_r\right) \] means that the form of the factorization is $ D_{\LL}
= \prod_{i=1}^r D_i^{p_i}$ with $\deg D_i = u_i$. This writing
reflects the additive decomposition of $\deg D_{\LL} = n(n-1)h$ (where
$n=\rk(W)$ and $h=d_n$) in terms of the $u_i$'s:
\[ \deg D_{\LL} = \sum_i p_i u_i \ . \]

\subsection*{By-products}

These numbers $(p_i,u_i)$ have many combinatorial interpretations. In
particular, thanks to Theorems \ref{thm:LL} and \ref{thm:fact}, we have:
\begin{itemize}
\item the number of conjugacy classes of parabolic Coxeter elements of
  length $2$ is the number of terms in the sum (each term
  $(p_i,u_i)$ of the sum
  corresponds to one of these classes, say $\Lambda_i$);
\item the order of the elements in $\Lambda_i$ is $p_i$ (provided $W$ is a
  $2$-reflection group);
\item the number $|\Fact_{(2,1,\dots, 1)}^{\Lambda_i}(c)|$ of
  submaximal factorizations of a Coxeter element $c$, whose first
  factor is in the class $\Lambda_i$, equals $\frac{(n-2)!\
    h^{n-1}}{|W|} u_i $. For convenience the first factor is also
  listed in the table, in the second colum.
\end{itemize}

We refer to \cite[App.\ A]{These} for a detailed explanation of the
computation of these data.

\section*{Acknowledgements}

I am grateful to Kyoji Saito for enriching mathematical exchanges
during the ``Hyperplane arrangements'' conference in Sapporo, in
August 2009. This work was part of my PhD thesis dissertation
\cite{These}. I would like to thank heartily my advisor David Bessis
for his constant support and his help on many points.

\bibliographystyle{alpha}
\bibliography{totalbibli}

\end{document}